\pgfplotsset{width=10cm,compat=1.9}
\def\R{\mathbb R}
\newcommand{\RR}{\mathbb{R}}
\def\Im{\text{Im}}
\def\dim{\text{dim}}
\theoremstyle{definition}
\newtheorem{theorem}{Theorem}[section]
\newtheorem{lemma}[theorem]{Lemma}
\newtheorem{proposition}[theorem]{Proposition}
\newtheorem{corollary}[theorem]{Corollary}
\newtheorem{definition}[theorem]{Definition}
\newtheorem{example}[theorem]{Example}
\newtheorem{remark}[theorem]{Remark}
\title[Connectedness of multistationarity regions of small reaction networks]{On the connectedness of multistationarity regions\\ of small reaction networks }
\author{Allison McClure and Anne Shiu (Texas A\&M University)}
\date{April 5, 2024}
\begin{document}

\maketitle

\begin{abstract}
A multistationarity region is the part of a reaction network's parameter space that gives rise to multiple steady states. Mathematically, this region consists of the positive parameters for which a parametrized family of polynomial equations admits two or more positive roots.  Much recent work has focused on analyzing multistationarity regions of biologically significant reaction networks and determining whether such regions are connected; indeed, a better understanding of the topology and geometry of such regions may help elucidate how robust multistationarity is to perturbations.  Here we focus on the multistationarity regions of small networks, those with few species and few reactions.  For two families of such networks -- those with one species and up to three reactions, and those with two species and up to two reactions -- we prove that the resulting multistationarity regions are connected.  We also give an example of a network with one species and six reactions for which the multistationarity region is disconnected. Our proofs rely on the formula for the discriminant of a trinomial, a classification of small multistationary networks, and a recent result of Feliu and Telek that partially generalizes Descartes' rule of signs.

\vskip 0.1in

\noindent
{\bf Keywords:}
steady state, 
multistationary, 
reaction network, mass-action kinetics, discriminant 

\noindent
{\bf MSC Codes:}
37N25,  
92E20, 
12D10, 
37C25 
\end{abstract}

\section{Introduction} \label{sec:intro}
This work focuses on the question, {\em Which reaction networks give rise to multistationarity regions that are connected?}  Mathematically, this question translates to the following: For certain systems of polynomials in variables $x_1,x_2,\dots, x_n$, and
involving positive real parameters 
$\kappa_1,\kappa_2,\dots, \kappa_r$:
    \begin{align*}
     f_i(\kappa;x)~=~0 \quad \quad {\rm for~} i=1,2,\dots, n~,
    \end{align*}
when is the set of parameter vectors $\kappa$ for which the system admits more than one positive real root $x \in \mathbb{R}^n_{>0}$, connected?  

\subsection{Motivation}
Reaction networks arise in many applications -- including systems biology, ecology, and epidemiology -- and one key question is whether the resulting dynamical systems are multistationary (that is, admit multiple steady states).  In applications, multistationarity (or, more precisely, multistability) is the foundation for cellular switch-like and decision-making behavior~\cite{laurent}.  Accordingly, much work has focused on the question of which networks are multistationary~\cite{mss-review}.

Given a multistationary network, an important follow-up problem is
to describe the {\em multistationarity region}: the set of all parameters (reaction rate constants and/or conservation-law values) for which the corresponding dynamical system is multistationary.  
Understanding this ``geography of parameter space''~\cite{parameter-geography} -- for instance, are these regions open (and hence full dimensional) or connected? -- 
has attracted much attention in recent years and may help us understand how robust multistationarity and multistability are to perturbations~\cite{telek-feliu-topological}.  

\subsection{Our contribution}

Our main result pertains to small reaction networks, as follows.
\begin{theorem} \label{thm:summary}
If $G$ is a network with (i)
exactly one species and up to three reactions or
(ii)~exactly two species and up to two reactions,
then the multistationarity region of $G$ is connected.
    \end{theorem}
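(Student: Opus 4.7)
The strategy is to reduce both (i) and (ii) to the analysis of a single univariate steady-state polynomial with at most three monomial terms --- a trinomial --- and then to describe the multistationarity region via explicit sign conditions together with the classical trinomial discriminant. First, I would invoke the classification of small multistationary networks referenced in the abstract to enumerate, up to symmetry, the finitely many networks of each type for which the region is nonempty. For case (i), with one species $x$ and up to three reactions, the steady-state equation is $f(\kappa;x) = \sum_{j=1}^{3} \kappa_j (y'_j - y_j)\, x^{y_j}$, a trinomial in $x$ whose coefficients are signed monomials in the rate constants (with at most two reactions this is a binomial, admitting at most one positive root, so the region is empty). For case (ii), with two species and two reactions, the stoichiometric subspace is one-dimensional, and a conservation-law parameter $T$ lets one eliminate one variable so that the positive steady states correspond to positive roots of a single univariate polynomial with at most three monomial terms, whose coefficients depend polynomially on $(\kappa, T)$.

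Next, for each reduced trinomial $c_1 x^{a_1} + c_2 x^{a_2} + c_3 x^{a_3}$ with $a_1 < a_2 < a_3$, I would combine Descartes' rule with the Feliu--Telek refinement to characterize the multistationarity condition: two (or more) positive roots occur exactly when the coefficient signs exhibit two sign changes \emph{and} the trinomial discriminant $\Delta(c_1, c_2, c_3)$ is strictly positive. The explicit binomial form of $\Delta$ turns the latter into a single inequality of shape $A \kappa^{\alpha} > B \kappa^{\beta}$, where both sides are monomials in the parameters. In logarithmic coordinates $u_i = \log \kappa_i$ (and $v = \log T$ in case (ii)), each sign-pattern requirement on the $c_j$ and the discriminant inequality become linear half-spaces, so the multistationarity region is a finite intersection of open half-spaces --- convex, hence connected. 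Pulling back by the coordinate-wise exponential map preserves connectedness and yields the theorem.

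The step I expect to be the main obstacle is the reduction in case (ii): after eliminating a variable via the conservation law, the coefficients of the resulting univariate polynomial need not be pure monomials in $(\kappa, T)$; generically each is a short sum of monomials, and then the log-linear convexity argument above breaks. For those networks one must instead verify connectedness by a direct path construction --- for instance, by exhibiting a one-parameter deformation of parameters that joins any two prescribed points of the region while keeping both the discriminant and the relevant coefficient signs strictly positive throughout. Organizing the finite list of networks from the classification so that these cases can be dispatched uniformly, rather than by separate ad hoc arguments, is likely the most delicate portion of the proof.
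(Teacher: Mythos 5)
Your plan for case (i) is essentially the paper's argument: the steady-state equation of a one-species network with three reactions (with distinct reactant complexes and alternating signs, as forced by the Joshi--Shiu classification) is a trinomial $x^n-cx^k+b$ whose coefficients are single monomials in $\kappa$, Swan's discriminant formula turns the two-positive-root condition into one inequality comparing two monomials in $\kappa$, and connectedness follows; your log-coordinate half-space argument is a perfectly good elementary substitute for the Feliu--Telek lemma that the paper invokes at this step (and for at most two reactions the region is indeed empty). One small correction: the classification does not give ``finitely many networks up to symmetry'' but an infinite family parametrized by the stoichiometric coefficients; this is harmless since your argument treats them uniformly.

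For case (ii), however, there is a genuine gap, and it sits exactly where you predicted. Substituting the conservation relation $x_2=\gamma x_1+c/(y_1'-y_1)$ into the steady-state equation $\kappa x^{y}= \lambda\widetilde{\kappa}\, x^{\widetilde{y}}$ yields $\kappa x_1^{y_1}(\gamma x_1+c')^{y_2}=\lambda\widetilde{\kappa}\, x_1^{\widetilde{y}_1}(\gamma x_1+c')^{\widetilde{y}_2}$, which after expansion is generally \emph{not} a trinomial and whose coefficients are not monomials in $(\kappa,\widetilde{\kappa},c)$ (indeed $\gamma<0$ or $c'<0$ can occur), so the sign-pattern-plus-discriminant-plus-log-convexity scheme does not apply; your fallback of ``a direct path construction'' is precisely the content that has to be proved, and it is not supplied. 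The paper avoids elimination altogether: the positive steady states are the intersections of the line $x_2=\gamma x_1+c/(y_1'-y_1)$ with the power curve $x_2=Kx_1^{-1/\alpha}$, where $K$ is a monomial in $(\kappa,\widetilde{\kappa})$ and, by the box-diagram (zigzag) classification, $\gamma$ and $\alpha$ have opposite signs with $\alpha\notin\{0,-1\}$; for fixed rate constants there is a unique tangent compatibility class, giving an explicit cutoff $c^*=h(\kappa,\widetilde{\kappa})$, and the multistationarity-enabling region is the set of $(\kappa,\widetilde{\kappa},c)$ lying on one side of the graph of the continuous function $h$ (three cases according to $\alpha>0$, $-1<\alpha<0$, $\alpha<-1$), which is visibly connected, with the allowing region then being all of $\mathbb{R}^2_{>0}$. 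You would also need to treat the degenerate multistationary networks (reactant-polytope slope $-1$, coinciding reactants, and the two axis-aligned configurations), which the nondegenerate classification does not cover and whose regions are measure-zero sets cut out by a single equation, and to address both the multistationarity-allowing and multistationarity-enabling regions, since the theorem concerns both; neither point appears in your proposal.
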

Theorem~\ref{thm:summary} encompasses Theorems~\ref{thm:1-species-up-to-3-rxns} and~\ref{thm:2-species} in the main text.  
We also give an example of a multistationarity region that is disconnected, which arises from a network with only one species and six reactions (Proposition~\ref{prop:disconnected}). 
To our knowledge, only one other network in the literature has been shown to exhibit a disconnected multistationarity region~\cite{telek-feliu-topological}.  
Additionally, for all networks considered in this work, we completely describe the multistationarity region (some of these regions are listed in Table~\ref{tab:networks} in Section~\ref{sec:multi-region}), a task that is generally infeasible for medium- to large-size networks (say, $10$ or more reactions).  
Our main results are summarized in Table~\ref{tab:summary}.

\begin{table}[ht]
	\begin{center}
		\begin{tabular}{l c c}
			\hline
			Number of species & Number of reactions & Is multistationarity region connected?\\
			\hline
			1 & at most $3$ & Yes \\
			1 & $6$ or more & Yes or No \\
			2 & at most $2$ & Yes \\
			\hline
		\end{tabular}
	\end{center}
	\caption{Summary of main results (Proposition~\ref{prop:disconnected} and Theorems~\ref{thm:1-species-up-to-3-rxns} and~\ref{thm:2-species}). \label{tab:summary}}
\end{table}%

Our proofs rely on a classification of small multistationary networks due to Joshi and Shiu~\cite{joshi-shiu-small} and on two results pertaining to polynomials: a formula for the discriminant of a trinomial~\cite{swan} and 
a result of Feliu and Telek that  
partially generalizes Descartes' rule of signs~\cite{TelekDescartes}.

\subsection{Relation to literature}
Other researchers have also analyzed the multistationarity regions of small reaction networks.  
 For instance, Joshi computed the (unique) inequality defining the  multistationarity regions of fully open, one-species networks  with only one non-flow reaction, such as $\{0 \rightleftarrows A,~ 2A \to 3A\}$~\cite{joshi2013complete}.  
Subsequently, Helmer and Feliu used Gale duality to extend Joshi's analysis 
to allow for any number of species (again in networks with only one non-flow reaction)~\cite{feliu-helmer}; connectedness of the multistationarity region, however, is not immediate from their results.  Related results were proven by Tang, Lin, and Zhang~\cite[Theorem 6.1]{multi-1-d}.  Our work significantly extend Joshi's result (see Example~\ref{ex:joshi-again}), but our results do not touch upon the related works~\cite{feliu-helmer, multi-1-d}. 

As mentioned earlier, a complete description of the multistationarity region is generally infeasible (or at least unwieldy) for 
networks of biologically realistic sizes.  One approach, therefore, is to 
establish open subsets within multistationarity regions.  
This has been accomplished for certain biochemically significant networks, by using algebraic techniques that harness the structure and sparsity appearing in polynomial systems arising in many biochemical networks~\cite{bihan-dickenstein-giaroli, conradi2019multistationarity, giaroli-rischter-mpm-dickenstein, bistab-seques}. 
Another approach, 
due to Sadeghimanesh and England, is to approximate multistationarity regions by polynomial super-level sets~\cite{amir-superlevel}.
A related attempt to gain a detailed understanding of such regions was pursued
by Bradford {\em et al.}, who used computational (symbolic and numerical) techniques~\cite{bradford2020identifying}.

Finally, as noted before, much recent interest has focused on the question of whether 
multistationarity regions are connected.  
Feliu, Kaihnsa, de Wolff, and Y\"{u}r\"{u}k showed that this region\footnote{More precisely, this region is what we call the ``multistationarity-allowing region'' (Definition~\ref{def:multistationarity-region}). } is indeed connected for an important biological signaling network -- the dual-site phosphorylation cycle with sequential and distributive mechanisms~\cite[\S5]{FKdY} -- and then subsequently generalized this result significantly~\cite{FKdY2}.  Their result proves rigorously what was strongly suggested by a recent numerical (as opposed to symbolic) investigation of the same system by Nam {\em et al.}~\cite{parameter-geography}. (The difference in what was analyzed concerns multiple steady states versus multiple {\em exponentially stable} steady states, but – up to a set of measure zero – these regions are expected to coincide). 

Additional results pertaining to connectedness of multistationarity regions are due to Telek and Feliu~\cite{TelekDescartes, telek-feliu-topological}. Notably, they give an algorithm that can assert connectedness.  However, in some cases, the algorithm is inconclusive (as is the case for many of the networks analyzed in our work). 

\begin{remark}\label{rem:path-conn}
All occurrences of ``connected'' in our work can be replaced by ``path-connected''.  Indeed, the two concepts are equivalent for open subsets of Euclidean space, and all multistationarity regions considered in our examples and results are open (in fact, defined by strict inequalities).
\end{remark}

\subsection{Organization of article}
This article has two background sections: 
one on 
polynomials (Section~\ref{sec:background-polynomials}) 
and one on
reaction networks (Section~\ref{sec:background}). 
In Section~\ref{sec:multi-region}, we define multistationarity regions, and then our
main results appear in
Section~\ref{sec:results}.
We end with a discussion in Section~\ref{sec:discussion}.

\section{Background on polynomials} \label{sec:background-polynomials}
This section recalls results on polynomials in one variable (Section~\ref{sec:univariate}) and several variables (Section~\ref{sec:multivariate}).

\subsection{Univariate polynomials} \label{sec:univariate}
The following formula for the discriminant of a univariate trinomial (that is, a one-variable polynomial with only three monomials) is due to Swan~\cite{greenfield-drucker, swan}.

\begin{lemma}[Discriminant of trinomial]  \label{lem:trinomial-discriminant}
The trinomial \(g(x) = x^{n} + ax^k + b\), where \(0 < k < n\), has {discriminant} 
\begin{equation*}
    (-1)^{n(n-1)/2}b^{k-1} 
        \left[ 
        n^{N}b^{N-K}-(-1)^{N}(n-k)^{N-K}k^{K}a^{N} \right] ^d~,
\end{equation*}
where \(d = \gcd(n,k)\), \(N = n/d\), and \(K=k/d\). 
\end{lemma}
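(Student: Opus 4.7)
The plan is to reduce the statement to a standard computation with resultants. Since $g(x) = x^n + ax^k + b$ is monic of degree $n$, one has $\mathrm{disc}(g) = (-1)^{n(n-1)/2}\,\mathrm{Res}(g, g')$, so it suffices to evaluate this resultant. I would factor $g'(x) = nx^{n-1} + akx^{k-1} = x^{k-1} p(x)$ with $p(x) := nx^{n-k} + ak$. Multiplicativity of the resultant in its second argument yields $\mathrm{Res}(g, g') = \mathrm{Res}(g, x^{k-1}) \cdot \mathrm{Res}(g, p)$. The first factor is immediate: since $g$ is monic, $\mathrm{Res}(g, x^{k-1}) = \prod_i \alpha_i^{k-1} = ((-1)^n b)^{k-1}$ by Vieta, which already accounts for the prefactor $b^{k-1}$ in the stated formula.

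The core of the argument is the evaluation of $\mathrm{Res}(g, p)$. Let $\beta_1, \dots, \beta_{n-k}$ be the roots of $p$, so that $\beta_j^{n-k} = -ak/n$. Using $\beta_j^n = \beta_j^k \cdot \beta_j^{n-k}$, I get
\[
g(\beta_j) \;=\; \frac{1}{n}\bigl(a(n-k)\beta_j^k + nb\bigr).
\]
Combined with the product form $\mathrm{Res}(g, p) = (-1)^{n(n-k)} n^n \prod_j g(\beta_j)$, this reduces the problem to computing $\prod_{j=1}^{n-k} \bigl(a(n-k)\beta_j^k + nb\bigr)$.

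To evaluate that product, I would write $\beta_j = \gamma\zeta^j$ with $\gamma^{n-k} = -ak/n$ and $\zeta$ a primitive $(n-k)$-th root of unity. As $j$ varies, the powers $\zeta^{jk}$ cycle through the $(N-K)$-th roots of unity, each hit exactly $d$ times; this is precisely where the hypotheses $d = \gcd(n,k)$, $N = n/d$, $K = k/d$ (and the resulting $\gcd(N-K, K)=1$) enter. Applying the identity $\prod_{\omega^m=1}(c\omega + e) = e^m - (-c)^m$ with $m = N-K$, and then using the clean substitution $(\gamma^k)^{N-K} = \gamma^{K(n-k)} = (-ak/n)^K$ to eliminate $\gamma$ in favor of $a,k,n$, produces a bracket of the form $n^N b^{N-K} - (-1)^N (n-k)^{N-K} k^K a^N$, raised to the $d$-th power, after clearing a factor of $n^{Kd}$ from the denominator inside the $d$-th power and cancelling it with the $1/n^{d(N-K)}$ coming from the factor of $1/n$ in each $g(\beta_j)$.

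The last step is just collecting signs and $n$-powers. The factor of $n^n$ from $\mathrm{Res}(g, p)$ cancels the overall $1/n^{dN} = 1/n^n$, and the two sign contributions combine as $(-1)^{n(k-1)} \cdot (-1)^{n(n-k)} = (-1)^{n(n-1)} = 1$, leaving exactly $b^{k-1}\bigl[n^N b^{N-K} - (-1)^N (n-k)^{N-K} k^K a^N\bigr]^d$ for $\mathrm{Res}(g, g')$; multiplying by the $(-1)^{n(n-1)/2}$ from the discriminant-resultant relation gives the stated formula. I expect this sign-and-exponent bookkeeping to be the main obstacle, since nothing conceptually new happens after the roots-of-unity product is done, but it is easy to slip on an exponent of $(-1)$ or to mis-match a power of $n$ inside versus outside the $d$-th power.
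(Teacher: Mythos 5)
Your argument is correct: the resultant identity $\mathrm{disc}(g)=(-1)^{n(n-1)/2}\mathrm{Res}(g,g')$ for monic $g$, the splitting $g'=x^{k-1}\bigl(nx^{n-k}+ak\bigr)$, the evaluation of $g$ at the roots $\beta_j$ with $\beta_j^{n-k}=-ak/n$, and the roots-of-unity product (where $\gcd(n-k,k)=d$ gives the exponent $d$ and the $(N-K)$-th roots of unity) all check out, and the sign and $n$-power bookkeeping combines exactly as you claim. The paper itself gives no proof of this lemma -- it is quoted from Swan and Greenfield--Drucker -- and your derivation is essentially the standard resultant computation found in those references.
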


\begin{example} \label{ex:discriminant-quadratic}
    For a quadratic $x^2+bx+c$, the formula for the discriminant given in Lemma~\ref{lem:trinomial-discriminant} yields the standard discriminant: $b^2-4c$.
\end{example}

Next, we use Lemma~\ref{lem:trinomial-discriminant} to characterize the number of positive roots of a trinomial in which the coefficients alternate in sign.  We use this result in later sections.

\begin{proposition} \label{prop:num-roots-trinomial}
Consider a trinomial \(g(x) = x^{n} - cx^k + b\) with \(0 < k < n\), where $b>0$ and $c>0$.  Let $\mathfrak N(g)$ denote the number (counted without multiplicity) of positive roots of $g$, and let:
    \[ D(g)~:=~ n^{N}b^{N-K}-(n-k)^{N-K}k^{K}c^{N}~,\] 
    where
 \(d = \gcd(n,k)\), \(N = n/d\), and \(K=k/d\). 
 Then:
\begin{align*}
    \mathfrak N(g)
    ~=~ 
    \begin{cases}
        2 \quad & 
            \textrm{if } D(g)<0 \\
        1 \quad &
            \textrm{if } D(g) = 0 \\ 
        0 \quad &
            \textrm{if } D(g) > 0~.    
    \end{cases}
\end{align*}

\end{proposition}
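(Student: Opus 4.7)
The plan is to study $g$ directly via elementary calculus on the positive half-line, sidestepping the full discriminant formula. The derivative $g'(x) = x^{k-1}(nx^{n-k} - ck)$ vanishes on $(0,\infty)$ only at $x_* := (ck/n)^{1/(n-k)}$, with $g'<0$ to the left of $x_*$ and $g'>0$ to the right. Hence $g$ attains a unique global minimum on $(0,\infty)$ at $x_*$. Since $g(0) = b > 0$ and $g(x) \to +\infty$ as $x \to \infty$, the number of distinct positive roots $\mathfrak{N}(g)$ is $0$, $1$, or $2$ according as $g(x_*) > 0$, $g(x_*) = 0$, or $g(x_*) < 0$.

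The second step is to compute $g(x_*)$ and compare its sign to that of $D(g)$. Using $x_*^{n-k} = ck/n$, I would rewrite $x_*^n = (ck/n)\, x_*^k$, giving
\[
g(x_*) \;=\; b \,-\, \frac{c(n-k)}{n}\, x_*^k.
\]
Clearing denominators and using $x_*^{k(n-k)} = (ck/n)^k$ to eliminate $x_*$ shows that $g(x_*)$ has the same sign as
\[
n^n b^{n-k} \,-\, (n-k)^{n-k} k^k c^n.
\]
Now, writing $n = Nd$, $k = Kd$, and $n-k = (N-K)d$, this expression factors as $A^d - B^d$ with $A := n^N b^{N-K}$ and $B := (n-k)^{N-K} k^K c^N$ both positive. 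Since $t \mapsto t^d$ is strictly increasing on $(0,\infty)$, the sign of $A^d - B^d$ agrees with that of $A - B = D(g)$, and the three cases of the proposition follow immediately.

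The main obstacle I anticipate is simply the algebraic bookkeeping in the passage from $g(x_*)$ to $D(g)$: one must clear the fractional exponent $k/(n-k)$ hidden in $x_*$, and the reduction via $d = \gcd(n,k)$ requires carefully collecting the shared $d$th power. An alternative route through Lemma~\ref{lem:trinomial-discriminant} combined with Descartes' rule of signs (which alone gives $\mathfrak N(g) \in \{0,1,2\}$, since the coefficient sign pattern is $+,-,+$) is tempting, but that route would require separately ruling out repeated complex or negative roots in order to relate a vanishing discriminant to a double \emph{positive} root; and the parities of $n$ and $k$ affect whether $g$ has negative real zeros, producing enough cases that the calculus approach above is the cleaner option.
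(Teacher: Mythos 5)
Your proposal is correct, and it takes a genuinely different route from the paper. You work entirely by elementary calculus: the unique critical point $x_* = (ck/n)^{1/(n-k)}$, the decreasing-then-increasing shape with $g(0)=b>0$, and then an explicit computation showing $g(x_*) = b - \tfrac{c(n-k)}{n}x_*^k$, whose sign you correctly transfer (via the strictly increasing maps $t\mapsto t^{n-k}$ and $t\mapsto t^{d}$ on positive reals, using $x_*^{k(n-k)}=(ck/n)^k$ and the factorization $n^n b^{n-k}-(n-k)^{n-k}k^kc^n = A^d - B^d$ with $A=n^Nb^{N-K}$, $B=(n-k)^{N-K}k^Kc^N$) to the sign of $D(g)$. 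The paper instead leans on Swan's discriminant formula (Lemma~\ref{lem:trinomial-discriminant}): it fixes $c$, notes the same decreasing-then-increasing shape at $b=0$, and deforms by increasing $b$, observing that the two positive roots merge into a double root exactly at the unique $b^*$ where the discriminant --- and hence $D(g)$, which is monotone in $b$ --- vanishes. Your version buys self-containedness and rigor: it never invokes the discriminant formula, so the concern you raise about repeated complex or negative roots simply never arises, and the sign comparison is fully explicit rather than a graph-shifting argument. The paper's version buys brevity given that Lemma~\ref{lem:trinomial-discriminant} is already stated (and it makes the discriminantal nature of the multistationarity inequalities transparent), but your direct computation of the minimum is arguably the cleaner and more airtight proof of this particular proposition. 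Note also that the paper's one-directional use of the discriminant (double positive root implies discriminant zero, combined with monotonicity of $D$ in $b$) does evade the complex/negative-root issue you flag, so that criticism is a matter of taste rather than a gap in the paper; nonetheless your approach avoids the issue altogether.
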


\begin{proof} 
Let 
$g$ be as in the statement of the proposition. 
By applying Lemma~\ref{lem:trinomial-discriminant} and using the fact that $b>0$, we see that the discriminant of $g$ is $0$ if and only if 
$D(g)= n^{N}b^{N-K}-(n-k)^{N-K}k^{K}c^{N}  $ is also $0$. 

Our proof proceeds by analyzing what happens to $g(x)$ as $b$ varies (we view $c$ as fixed). 
First consider 
the case of $b=0$.  In this case, 
$D(g)<0$ and
(by a calculus exercise) the graph of $g(x)=x^n-c x^k=x^k(x^{n-k} -c)$, restricted to $x \geq 0$, 
satisfies the following: 
(1) there are 
two roots, at $0$ and at $\sqrt[n-k]{c}$, 
and
(2) 
the graph 
is decreasing on an interval $[0,\alpha]$ (with $\alpha < \sqrt[n-k]{c}$) and is 
increasing on the subsequent interval $[\alpha,\infty)$.  In other words, 
the graph of $g$ (for $x \geq 0$) has the form:

\begin{center}
\begin{tikzpicture}[scale=0.3]
\begin{axis}[
    axis lines = center,
    ticks=none
]
\addplot [
    domain=0:1.7, 
    samples=50, 
    color=blue,
]
{x^2 - 1.5*x};
\end{axis}
\end{tikzpicture}
\end{center}

\noindent
As $b$ increases, the above graph shifts up (by $b$).  At first, there are two positive roots, and then they become a double root, at which point the discriminant of $g$ and hence also $D(g)$ is $0$.  This happens exactly when $b$ takes the following value:
\[
b^* ~=~
    \sqrt[N-K]{ \frac{(n-k)^{N-K}k^{K}(c^*)^{N}}{n^{N}} }~.
\]
For larger $b$ (that is, $b> b^*$), 
we have $D(g)>0$ and
there are no positive roots.  Now our desired result follows directly.
\end{proof}

\subsection{Multivariate polynomials} \label{sec:multivariate}
Following~\cite{TelekDescartes}, a 
{\em polynomial function} denotes a function $g:\mathbb{R}_{>0}^{n} \rightarrow \mathbb{R}$ obtained by restricting a multivariate polynomial to the positive orthant:
    \[
    g(x)~=~ c_1 x_1^{\mu_{11}} x_2^{\mu_{12}} \dots x_n^{\mu_{1n}}
            ~+~ 
            c_2 x_1^{\mu_{21}} x_2^{\mu_{22}} \dots x_n^{\mu_{2n}}
            ~+~ 
            \dots 
            ~+~ 
            c_{\ell} x_1^{\mu_{\ell 1}} x_2^{\mu_{\ell 2}} \dots x_n^{\mu_{\ell n}}~,
    \]
where $c_i \in \mathbb R $ and $\mu_{ij} \in \mathbb{Z}_{\geq 0}$
for all $i \in \{1,2,\dots, \ell\}$ and $j \in \{1,2,\dots, n\}$.
The following result pertaining to polynomial functions is due to Feliu and Telek \cite[Theorem~3.4]{TelekDescartes}:
\begin{lemma} \label{lem:signomial}
Let \( g:\mathbb{R}_{>0}^{n} \rightarrow \mathbb{R}\) be a 
polynomial function.  If at most one coefficient of $g$ is negative, then \(g^{-1}(\mathbb{R}_{<0})\) is connected.
\end{lemma}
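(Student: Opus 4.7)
The plan is to reduce the statement to a statement about sublevel sets of a convex function via the standard logarithmic change of coordinates $x_j = e^{y_j}$ that turns the positive orthant into all of $\mathbb{R}^n$ and turns monomials into exponentials of linear forms.

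First I would dispose of the trivial case. If every coefficient of $g$ is non-negative, then $g(x) \geq 0$ for all $x \in \mathbb{R}_{>0}^n$, so $g^{-1}(\mathbb{R}_{<0}) = \emptyset$ is connected. Hence assume exactly one coefficient is negative: after relabeling, write
\[
g(x) ~=~ -c_0\, x^{\mu_0} ~+~ \sum_{i=1}^{\ell-1} c_i\, x^{\mu_i}~,
\]
where $c_0, c_1, \dots, c_{\ell-1} > 0$ and $\mu_0, \mu_1, \dots, \mu_{\ell-1} \in \mathbb{Z}_{\geq 0}^n$, using the shorthand $x^{\mu} := x_1^{\mu_1} \cdots x_n^{\mu_n}$.

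Next I would renormalize. Since $c_0 x^{\mu_0} > 0$ on $\mathbb{R}_{>0}^n$, dividing $g(x) < 0$ through by this positive quantity gives the equivalent condition
\[
h(x) ~:=~ \sum_{i=1}^{\ell-1} \frac{c_i}{c_0}\, x^{\mu_i - \mu_0} ~<~ 1~,
\]
where the exponents $\mu_i - \mu_0$ now lie in $\mathbb{Z}^n$ (possibly with negative entries). Thus $g^{-1}(\mathbb{R}_{<0})$ is the preimage under $h$ of the open ray $(-\infty, 1)$.

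Now I would apply the coordinate change $\varphi: \mathbb{R}^n \to \mathbb{R}_{>0}^n$ defined by $\varphi(y) = (e^{y_1}, \dots, e^{y_n})$; this is a homeomorphism, so it suffices to show that $\varphi^{-1}(g^{-1}(\mathbb{R}_{<0})) = \{y \in \mathbb{R}^n : H(y) < 1\}$ is connected, where
\[
H(y) ~:=~ h(\varphi(y)) ~=~ \sum_{i=1}^{\ell-1} \frac{c_i}{c_0}\, \exp\!\bigl( \langle \mu_i - \mu_0,\, y \rangle \bigr)~.
\]
Each summand is the exponential of a linear functional in $y$, hence convex in $y$, and $H$ is a sum of such terms with positive coefficients, so $H : \mathbb{R}^n \to \mathbb{R}$ is convex. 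The strict sublevel set $\{y : H(y) < 1\}$ of a convex function is convex, hence (path-)connected; transporting back via $\varphi$ yields the claim.

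The main conceptual point, and the only place where the hypothesis of at most one negative coefficient is used, is the renormalization step: factoring out the unique negative monomial turns the inequality $g(x) < 0$ into a strict sublevel condition on a sum of positive monomials, which in log-coordinates is a convex function. Without the single-negative-coefficient hypothesis, $H$ would no longer be a positive combination of convex exponentials and the convexity argument would fail; so I do not expect a serious obstacle beyond making sure this reduction is stated cleanly.
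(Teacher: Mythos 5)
Your argument is correct and complete: factoring out the unique negative monomial (which is strictly positive on $\mathbb{R}_{>0}^n$), passing to logarithmic coordinates $x_j=e^{y_j}$, and observing that the resulting function $H$ is a positive combination of exponentials of linear forms -- hence convex, with convex strict sublevel set $\{H<1\}$ -- does prove the lemma, and the trivial cases (no negative coefficient, or only the negative monomial present) are handled correctly. Note, however, that the paper itself does not prove this statement: it is imported as Lemma~\ref{lem:signomial} from Feliu and Telek~\cite[Theorem~3.4]{TelekDescartes}, so there is no in-paper proof to compare against. Your reduction is essentially the standard argument underlying the one-negative-coefficient case in that reference (their general theorem allows several negative coefficients under a condition on the exponents, precisely because the bare convexity argument you describe no longer applies there), so your proof is a valid, self-contained justification of the special case actually used in this paper.
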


\section{Background on reaction networks} \label{sec:background}
This section recalls reaction networks and mass-action systems (Section~\ref{sec:network}) 
and their capacity for multiple steady states (Section~\ref{sec:mss}).

\subsection{Mass-action systems} \label{sec:network}
As a preview to formal definitions, we begin with an example, which  serves as a running example in this work.  The following {\em reaction network} consists of two {\em reactions} involving the {\em species} $A$ and $B$:
\begin{align} \label{eq:2-species}
	\{ 2A + B \to 3A,~ A \to B\}~.
\end{align}
In examples, as above, we write species as $A,B,C,\dots$.  However, for precise definitions, it is more convenient to write $X_1,X_2,X_3,\dots$, as follows.

A {\em reaction network} $G$ consists of a finite set of {\em reactions}, as follows:
\begin{align*}
	G ~=~ 
	\left\{
	(y_{i1}X_1 + y_{i2} X_2 + \dots + y_{in} X_n) 
	\to 
	(y_{i1}' X_1 + y_{i2}' X_2 + \dots + y_{in}' X_n) 
	\mid 
	i=1,2, \dots, r
	\right\}~,
\end{align*}
where all $y_{ij}$ and $y_{ij}'$ are nonnegative integers, and $X_1, X_2, \dots, X_n$ are {\em species.}  For a {\em complex} 
$y_{i1}X_1 + y_{i2} X_2 + \dots + y_{in} X_n $, we use the shorthand 
$y_i=( y_{i1}, y_{i2}, \dots, y_{in})$.  For a reaction $y_i \to y'_i$, the complex $y_i$ is the {\em reactant} and $y_i'$ is the {\em product}.

Next, we describe how a reaction network $G$ defines, via mass-action kinetics, a system of ordinary differential equations (ODEs). 
Let $r$ denote the number of reactions of $G$, and let $y_i \to y_i'$ denote the $i$-th reaction.
Let $\kappa=(\kappa_1, \kappa_2, \dots, \kappa_r) \in \mathbb{R}^r_{>0}$ denote a vector of positive {\em rate constants} (one rate constant per reaction).  
The {\em mass-action system} arising from $G$ and $\kappa$, which we denote by $(G,\kappa)$, is the dynamical system defined by the following ODEs:
\begin{align} \label{eq:mass_action_ODE}
		\frac{dx}{dt} ~=~ \sum_{i=1}^r  \kappa_{i} x^{y_i} (y_i'-y_i) ~=:~ f_{\kappa}(x)~,
\end{align}
where  $x_i(t)$ denotes the concentration of the species $X_i$ at time $t$, and $x^{y_i} := \prod_{j=1}^n x_j^{y_{ij}}$.

Next, the right-hand sides of the ODEs~\eqref{eq:mass_action_ODE} always lie in the linear subspace of $\mathbb{R}^n$ spanned by all reaction vectors $y_i' - y_i$ (for $i=1,2,\dots, r$).   This subspace is the {\em stoichiometric subspace} of $G$, and we denote it by $S$.
A network is {\em full dimensional} if $S= \mathbb{R}^n$.
 
Another property of the ODEs~\eqref{eq:mass_action_ODE} is that forward-time solutions $\{x(t) \mid t \ge 0\}$ that begin in the nonnegative orthant $\RR_{\geq 0}^n$, remain in 
$\RR_{\geq 0}^n$.  
Hence,  
a solution $\{x(t) \mid t \ge 0\}$ of~\eqref{eq:mass_action_ODE}, with initial condition $x(0)  \in \R_{>0}^n$, stays in the following {\em stoichiometric compatibility class}: 
\begin{align} \label{eq:SCC}
P ~=~ (x(0)+S)\cap \RR_{\geq 0}^n~.
\end{align}

\begin{example} \label{ex:ODEs}
The network~\eqref{eq:2-species} generates the following mass-action ODEs~\eqref{eq:mass_action_ODE}:  
	\begin{align} \label{eq:ODEs-running-example}
    \frac{dx_1}{dt} ~&=~ \kappa_{1}x_{1}^{2}x_{2} - \kappa_{2}x_{1}  \\
    \frac{dx_2}{dt} ~&=~ -\kappa_{1}x_{1}^{2}x_{2} + \kappa_{2}x_{1}~. \notag
    \end{align}
The stoichiometric subspace is one-dimensional, spanned by the vector $(1,-1)^\top$. 
The stoichiometric compatibility class~\eqref{eq:SCC} defined by the initial condition 
$x(0)= (1,1.5)$ is as follows:
\begin{align} \label{eq:SCC-for-2-species}
P ~=~ \{ (x_1,x_2) \in \mathbb{R}^2_{\geq 0} \mid x_1 + x_2 = 2.5 \}~.
\end{align}
\end{example}

The equation 
$x_1 + x_2 = 2.5$, 
in~\eqref{eq:SCC-for-2-species},
can be viewed as a conservation law.  Accordingly, we reframe stoichiometry-related concepts in terms of conservation laws, as follows.  A {\em conservation-law matrix} of $G$, denoted by $W$, refers to a $(d\times n)$-matrix whose rows are a basis of the orthogonal complement of $S$ (here, $d=n - \dim(S)$).  Now the stoichiometric compatibility class~\eqref{eq:SCC} can be rewritten: 
\begin{align} \label{eq:SCC-rewritten}
P_{c} ~=~ \{x\in {\mathbb R}_{\geq 0}^n \mid Wx=c\}~,
\end{align}
where $c:= W x(0) \in {\mathbb R}^d$ is called a
{\em total-constant vector} (or {\em total-concentration vector}).  

\begin{example}[Example~\ref{ex:ODEs}, continued] \label{ex:cons-law}
Returning to the network
$\{ 2A + B \overset{\kappa_1}{\to} 3A,~ A \overset{\kappa_2}{\to} B\}$, a conservation-law matrix is the $(1 \times 2)$-matrix
$W=[1~1]$.  For $x(0)= (1,1.5)$, we have $c= W x(0)=2.5$, so the 
stoichiometric compatibility class found earlier~\eqref{eq:SCC-for-2-species} matches the description of $P_c$ in~\eqref{eq:SCC-rewritten}.
\end{example}

\subsection{Multistationarity} \label{sec:mss}

A {\em steady state} of a mass-action system $(G,\kappa)$ is some $x^*\in \R_{\geq 0}^n$ at which the right-hand side of the ODEs \eqref{eq:mass_action_ODE} vanishes: $f_{\kappa}(x^*)=0$.  Of particular importance are {\em positive} steady states $x^*\in \R_{> 0}^n$. 
Finally, a steady state  $x^*\in \mathbb{R}_{\geq 0}^n$ 
is {\em nondegenerate} 
if $\Im(df_{\kappa}(x^*)|_S)=S$, where $S$ is the stoichiometric subspace and $df_{\kappa}(x^*)$ is the Jacobian matrix of $f_\kappa$ evaluated at $x^*$.

\begin{definition}[Multistationary] ~
\begin{enumerate}
    \item 
    A mass-action system $(G,\kappa)$ is {\em multistationary} (respectively, {\em nondegenerately multistationary})
 if there exists a stoichiometric compatibility class~\eqref{eq:SCC} that contains
two or more positive steady states  (respectively, nondegenerate positive steady states).  
    \item A reaction network $G$ is {\em multistationary}  (respectively, {\em nondegenerately multistationary}) if there exists a vector of positive rate constants $\kappa$ such that $(G,\kappa)$ is multistationary  (respectively, nondegenerately multistationary). 
\end{enumerate}
\end{definition}

\begin{example}[Example~\ref{ex:cons-law}, continued] \label{ex:ODEs-2}
We return to the network
$ \{ 2A + B \overset{\kappa_1}{\to} 3A,~ A \overset{\kappa_2}{\to} B\}$.
This network is known to be multistationary~\cite{joshi-shiu-small} (see also \cite[Example~1]{cyclic}).
Indeed, when $\kappa_1=\kappa_2$, it is straightforward to check that, 
from the ODEs~\eqref{eq:ODEs-running-example}, the
stoichiometric compatibility class~\eqref{eq:SCC-for-2-species}
contains two positive steady states: $(0.5, 2)$ and $(2, 0.5)$.  
\end{example}

The remainder of this section is devoted to recalling 
results pertaining to multistationary networks with only one or two species~\cite{joshi-shiu-small}.

\begin{lemma}[Multistationarity in 1-species networks with few reactions] \label{lem:1-species-mss}
Assume $r \in \{1,2,3\}$.
Let $G$ be a network with only $1$ species and exactly $r$ reactions, which we write as 
    $\{m_1 A \to p_1 A,~ 
    \dots~,~ m_r A \to p_r A \}$, where 
    $m_1  
    \leq \dots \leq m_r$.
\begin{enumerate}
    \item If $r=1$ or $r=2$, then $G$ is not multistationary.
    \item If $r=3$, then $G$ is multistationary if and only if $m_1< m_2< m_3$ and additionally one of the following holds:
    \begin{enumerate}[(a)]
    \item $m_1<p_1$, $m_2>p_2$, and $m_3<p_3$; or
    \item $m_1>p_1$, $m_2<p_2$, and $m_3>p_3$.
    \end{enumerate}
\end{enumerate}
\end{lemma}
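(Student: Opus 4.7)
The plan is to recast multistationarity as the existence of two distinct positive real roots of the mass-action right-hand side and then invoke the trinomial analysis of Section~\ref{sec:univariate}. Since $G$ has a single species and at least one nontrivial reaction, the stoichiometric subspace is $S=\R$, so the only stoichiometric compatibility class meeting $\R_{>0}$ is $\R_{\geq 0}$ itself. Consequently, $(G,\kappa)$ is multistationary if and only if
\[
f_\kappa(x) ~=~ \sum_{i=1}^r \kappa_i(p_i-m_i)\, x^{m_i}
\]
has at least two distinct positive real roots. The proof then splits on $r$.

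For $r=1$, $f_\kappa$ is a single nonzero monomial and has no positive roots. For $r=2$, I would factor out $x^{m_1}$: if $m_1=m_2$, the remaining factor is the constant $\kappa_1(p_1-m_1)+\kappa_2(p_2-m_2)$, which for generic $\kappa$ is nonzero and rules out positive roots; if $m_1<m_2$, the factor $\alpha+\beta\, x^{m_2-m_1}$ is strictly monotone and has at most one positive root. This establishes part~(1), modulo the measure-zero bracket-vanishing subcase mentioned at the end.

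For part~(2), I would first argue that $m_1<m_2<m_3$ is necessary: if any two $m_i$ coincide, the support of $f_\kappa$ reduces to at most two distinct powers of $x$, and the $r=2$ analysis again yields at most one positive root. Assuming $m_1<m_2<m_3$, factoring $x^{m_1}$ out of $f_\kappa$ leaves the genuine trinomial
\[
h(x) ~=~ \kappa_1(p_1-m_1) ~+~ \kappa_2(p_2-m_2)\, x^{m_2-m_1} ~+~ \kappa_3(p_3-m_3)\, x^{m_3-m_1},
\]
whose coefficients are all nonzero since each $p_i\ne m_i$. Descartes' rule of signs then forces any $\kappa$ producing two positive roots to make the coefficient sequence alternate in sign; since $\kappa_i>0$, this is precisely one of the two patterns (a) or (b) on the differences $p_i-m_i$.

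For sufficiency, under pattern (a) (pattern (b) is symmetric, via negating $h$), I would divide through by the positive leading coefficient so that $h$ is proportional to a trinomial of the form $x^n-cx^k+b$ with $n=m_3-m_1$, $k=m_2-m_1$, and $b,c>0$ determined by $\kappa$. Choosing $\kappa_2$ large while keeping $\kappa_1,\kappa_3$ fixed makes $c$ arbitrarily large relative to $b$, which drives the quantity $D(g)$ in Proposition~\ref{prop:num-roots-trinomial} strictly negative and therefore produces two distinct positive roots. I expect the main obstacle to be the degenerate subcases in $r=2$ and in the coincident-exponent subcases of $r=3$, where a bracket can vanish identically and potentially produce a continuum of steady states on a measure-zero locus of rate constants; these must be checked against the precise definition of multistationarity to confirm they do not qualify.
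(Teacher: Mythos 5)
Your overall strategy is the right one, and in fact it is the only proof the paper gestures at: the paper does not prove Lemma~\ref{lem:1-species-mss} itself but quotes it from \cite{joshi-shiu-small}, remarking that the proof there rests on Descartes' rule of signs. Your reduction (one species makes the network full dimensional, so the unique compatibility class is $\mathbb{R}_{\geq 0}$ and multistationarity is exactly the existence of two distinct positive roots of $f_\kappa(x)=\sum_i \kappa_i(p_i-m_i)x^{m_i}$), Descartes for the ``only if'' direction, and Proposition~\ref{prop:num-roots-trinomial} (or simply sending $\kappa_2\to\infty$) for the ``if'' direction are exactly the intended argument; your handling of $r=1$, of $r=2$ with $m_1<m_2$, and of $r=3$ with $m_1<m_2<m_3$ is correct.

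The genuine gap is the case you defer at the end, and it cannot be closed in the way you hope. If $m_1=m_2$ (for $r=2$), or $m_1=m_2=m_3$ (for $r=3$), and the signs of the $p_i-m_i$ are not all equal, then there exist positive rate constants making $f_\kappa\equiv 0$ (for instance $\{A\to 0,\ A\to 2A\}$ with $\kappa_1=\kappa_2$), and then every point of $\mathbb{R}_{>0}$ is a positive steady state. Under the paper's stated definition of multistationarity (two or more positive steady states in some compatibility class) this \emph{does} qualify; indeed the paper itself counts the exactly analogous two-species situation -- equal reactant complexes with cancelling rate constants -- as (degenerate) multistationarity, in Lemma~\ref{lem:2-species-2-rxn-mss-DEGENERATE}(2) and in Case~2 of the proof of Proposition~\ref{prop:2-species-2-rxns-DEGENERATE}. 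So ``checking that these cases do not qualify'' is not available, and note also that your appeal to ``generic $\kappa$'' in the $r=2$, $m_1=m_2$ case is beside the point, since multistationarity only requires one bad $\kappa$. What your argument actually establishes is the lemma with ``multistationary'' read as ``nondegenerately multistationary'' (equivalently, restricted to those $\kappa$ with $f_\kappa\not\equiv 0$): when $f_\kappa\equiv 0$ one has $df_\kappa\equiv 0$, so all those steady states are degenerate, and your Descartes argument disposes of every remaining $\kappa$. To prove the statement as written you must either adopt that nondegenerate reading (which is how the cited result of \cite{joshi-shiu-small} should be understood) or explicitly append the same-reactant cancellation networks as additional, degenerate, multistationary cases, parallel to Lemma~\ref{lem:2-species-2-rxn-mss-DEGENERATE}; as it stands, your proof silently excludes them without justification.
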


The proof of Lemma~\ref{lem:1-species-mss}, in~\cite{joshi-shiu-small},
relies on the classical {\em Descartes' rule of signs}, which we recall states that the number of positive roots of a univariate polynomial $f$, counted with multiplicity, is at most the number of sign changes in the list of coefficients  (with zeroes removed) of $f$. 

\begin{example} \label{ex:3-rxns}
    By Lemma~\ref{lem:1-species-mss}, the network 
    $
    \{ A \overset{\kappa_1}{\to} 0,~ 2A \overset{\kappa_2}{\to} 3A,~ 4A \overset{\kappa_3}{\to} 3A \}
    =
    \{0 \overset{\kappa_1}{\leftarrow} A,~ 2A \overset{\kappa_2}{\to} 3A \overset{\kappa_3}{\leftarrow} 4A \}$
    is multistationary (we have $r=3$,  $(m_1,m_2,m_3)=(1,2,4)$, and $(p_1,p_2,p_3)=(0,3,3)$).
 \end{example}

To state the next result, which pertains to networks with two species and two reactions, we must first recall the concepts of reactant polytopes (Newton polytopes)~\cite{GMS2} and box diagrams~\cite{joshi-shiu-small}. 

\begin{definition} \label{def:NP}
Let $G$
 be a reaction network with $n$ species.
 \begin{enumerate}
     \item  The {\em reactant polytope} of $G$ is the convex hull of (i.e., the smallest convex set containing) the reactants of $G$ (in $\mathbb{R}^n$).
     \item 
     Assume $G$ has exactly two species ($n=2$) and two reactions, $y \to y'$ and $\tilde y \to \tilde y'$, such that the reactant vectors differ in both coordinates 
     (i.e., writing $y=(y_1, y_2)$ and $\tilde y=(\tilde y_1, \tilde y_2)$, then both $y_1 \neq \tilde y_1$ and  $y_2 \neq \tilde y_2$).  
     The {\em box diagram} of $G$ is the rectangle in $\mathbb{R}^2$ for which: 
	\begin{enumerate}
	\item the edges are parallel to the axes of $\mathbb{R}^2$, and
	\item the reactants $y$ and $\widetilde y$ are two opposite corners of the rectangle.
	\end{enumerate}	
 \end{enumerate}
\end{definition}

\begin{example}[Example~\ref{ex:ODEs-2}, continued] \label{ex:box-diagram}
The box diagram of our running example~\eqref{eq:2-species} is shown below, together with its reactant polytope (the marked diagonal of the rectangle):
\begin{center}
	\begin{tikzpicture}[scale=.60]
	\draw (-1,0) -- (5,0);
	\draw (0,-.5) -- (0,1.5);
	\draw [->] (1,0) -- (0.07,1);
	\draw [->] (2,1) -- (3,0.07);
	\path [fill=gray] (1,0) rectangle (2,1);
	\draw (1,0) --(2,1); 
    \node [below] at (1,0) {$A$};
    \node [left] at (0,1) {$B$};
    \node [right] at (2,1) {$2A+B$};
    \node [below] at (3,0) {$3A$};
	\end{tikzpicture}
\end{center}
\end{example}

The following result is due to Joshi and Shiu~\cite[Theorem~4.5]{joshi-shiu-small}.

\begin{lemma}[Nondegenerate multistationarity in networks with 2 species and 2 reactions] \label{lem:2-species-2-rxn-mss}
Consider a network $G$ with exactly two species and two reactions, denoted by $y \to y'$ and $\widetilde y \to \widetilde y'$.  Then $G$ is nondegenerately multistationary if and only if the following hold:
    \begin{enumerate}
    \item the reaction vectors are negative scalar multiples of each other, that is, $y'-y =- \lambda (\widetilde y' - \widetilde y)$ for some $\lambda \in \mathbb{R}_{>0}$, 
    \item 
    the reactants $y$ and $\widetilde y$ differ in both coordinates (so the box diagram of $G$ is defined), 
    \item the slope of the reactant polytope is {\em not} $-1$,  and
    \item  the box diagram of $G$ has one of the following ``zigzag'' forms:
\begin{center}
\begin{equation} \label{eq:boxes}
\begin{tikzpicture}[baseline=(current  bounding  box.center),scale=.6]
	\path [fill=gray] (0,0) rectangle (1.5,1);
	\draw [->] (0,1) -- (.5,1.3);
	\draw [->] (1.5,0) -- (1,-.3);
	\draw (0,1) --(1.5,0); 
	\path [fill=gray] (3,0) rectangle (4.5,1);
	\draw [->] (3,1) -- (2.5,.7);
	\draw [->] (4.5,0) -- (5,.3);
	\draw (3,1) --(4.5,0); 
	\path [fill=gray] (6,0) rectangle (7.5,1);
	\draw [->] (6,0) -- (5.5,.3);
	\draw [->] (7.5,1) -- (8, .7);
	\draw (6,0) --(7.5,1); 
	\path [fill=gray] (9,0) rectangle (10.5,1);
	\draw [->] (9,0) -- (9.5, -.3);
	\draw [->] (10.5,1) -- (10,1.3);
	\draw (9,0) --(10.5,1);
	\end{tikzpicture}
\end{equation}

\end{center}
\end{enumerate}
\end{lemma}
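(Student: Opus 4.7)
The plan is to reduce the bivariate steady-state problem to a univariate one and then characterize the possibility of two positive roots by a clean sign condition on the shape of a single function. Writing the mass-action ODE~\eqref{eq:mass_action_ODE} for the two reactions gives $\dot x = \kappa_1 x^y v_1 + \kappa_2 x^{\tilde y} v_2$, where $v_1 = y' - y$ and $v_2 = \tilde y' - \tilde y$. A positive steady state requires $v_1$ and $v_2$ to be linearly dependent with strictly negative ratio, since linearly independent $v_1,v_2$ force $\kappa_1 x^y = \kappa_2 x^{\tilde y} = 0$, and a positive ratio leaves a sum of positive monomials that cannot vanish; this establishes the necessity of condition~(1). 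Writing $v_1 = -\mu v_2$ with $\mu>0$ reduces the steady-state equation to $x^\alpha = C$, where $\alpha := y - \tilde y$ and $C := \kappa_2/(\mu\kappa_1) > 0$. If some $\alpha_i = 0$, the steady-state variety is a coordinate-parallel line, meeting any stoichiometric compatibility class in at most one positive point, giving the necessity of~(2).

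Assuming (1) and (2), I would parametrize the compatibility class through $x^{(0)}$ by $\phi(t) = (x_1^{(0)} + t v_{2,1},\, x_2^{(0)} + t v_{2,2})$ on the open interval $I \subset \mathbb{R}$ where $\phi(t) \in \mathbb{R}^2_{>0}$, and pull back the steady-state condition to
\[
    h(t)\;:=\;(x_1^{(0)} + t v_{2,1})^{\alpha_1}\,(x_2^{(0)} + t v_{2,2})^{\alpha_2}\;=\;C.
\]
A logarithmic-derivative computation shows that $h'(t) = 0$ is equivalent to the linear equation
\(
    (\alpha_1 + \alpha_2)\,v_{2,1} v_{2,2}\, t + \alpha_1 v_{2,1} x_2^{(0)} + \alpha_2 v_{2,2} x_1^{(0)} = 0.
\)
When $\alpha_1 + \alpha_2 = 0$ (reactant polytope slope $-1$), or when $v_{2,1} = 0$ or $v_{2,2} = 0$ (stoichiometric line axis-aligned), this forces $h$ to be strictly monotonic on $I$, whence $h(t)=C$ has at most one root; this gives the necessity of~(3) and the implicit requirement that $v_2$ has both components nonzero (which is built into the hypothesis that one of the box-diagram pictures in~\eqref{eq:boxes} applies).

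For the final equivalence, assume (1)--(3) with $v_{2,1}, v_{2,2} \neq 0$; then $h$ has a unique critical point $t^\ast \in \mathbb{R}$. I would read off the endpoint behaviour of $h$ on $I$: at an endpoint where $x_1^{(0)} + t v_{2,1} \to 0$ the value $h(t)$ tends to $0$ or $+\infty$ according as $\alpha_1 > 0$ or $\alpha_1 < 0$ (and analogously for the other endpoint), while at $t \to \pm\infty$ (if $I$ is unbounded) one has $h(t) \sim t^{\alpha_1+\alpha_2}$, with limit controlled by the sign of $\alpha_1 + \alpha_2$. Comparing these limits case by case shows that $h$ attains a strict interior extremum on $I$ if and only if $\alpha_1 \alpha_2 \cdot v_{2,1} v_{2,2} < 0$. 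The correspondence with the four pictures of~\eqref{eq:boxes} is then sign bookkeeping: the slope of the reactant polytope is positive iff $\alpha_1 \alpha_2 > 0$ and negative iff $\alpha_1 \alpha_2 < 0$, while the two arrows exit the box on opposite sides of the diagonal iff $v_{2,1} v_{2,2}$ has the opposite sign. When $t^\ast \in I$ is a strict extremum, varying $\mu, \kappa_1, \kappa_2$ allows $C$ to be placed strictly between the extremal value of $h$ and its boundary limits, producing two simple roots of $h(t)=C$ and hence two distinct positive steady states in one compatibility class; simplicity of the roots implies the Jacobian of $f_\kappa$ restricted to the one-dimensional stoichiometric subspace is invertible, so both steady states are nondegenerate. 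The main technical obstacle will be the sign bookkeeping matching the four zigzag forms to the single inequality $\alpha_1 \alpha_2\, v_{2,1} v_{2,2} < 0$; the product form of the criterion is what keeps the verification manageable.
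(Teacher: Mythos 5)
Your overall route is sound and is essentially the standard one: the paper itself does not prove this lemma (it is quoted from Joshi--Shiu \cite[Theorem~4.5]{joshi-shiu-small}), and the reduction you use --- steady states are the intersections of a compatibility-class line in direction $v_2$ with the monomial curve $x^{\alpha}=C$, $\alpha=y-\tilde y$, with multistationarity governed by the monotonicity of the pulled-back function $h$ --- is exactly the mechanism in that proof and the one reused in the proof of Proposition~\ref{prop:2-species-2-rxns} (compare~\eqref{eq:stoic-rewritten}--\eqref{eq:steady-state-rewritten} and Figure~\ref{fig:3-cases}). Your observation that simple roots of $h(t)=C$ give nondegenerate steady states is also correct, since along the class $f_\kappa$ is a positive multiple of $(1-h(t)/C)$ times $v_2$.

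There is, however, a genuine gap in your necessity arguments for conditions (2) and (3), precisely at the boundary between degenerate and nondegenerate multistationarity (the content of Lemma~\ref{lem:2-species-2-rxn-mss-DEGENERATE}). If some $\alpha_i=0$, the steady-state locus is a coordinate-parallel line (or, when $y=\tilde y$, either empty or all of $\mathbb{R}^2_{>0}$), and it is \emph{not} true that it meets every compatibility class in at most one positive point: when $v_2$ is parallel to that line the two can coincide, yielding infinitely many positive steady states in a single class, which is multistationarity. Likewise, when $\alpha_1+\alpha_2=0$ your linear equation for $h'(t)=0$ can degenerate to $0=0$ (the class meeting the ray of steady states through the origin), so $h$ is constant rather than strictly monotone, again giving a continuum of steady states. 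To conclude that (2) and (3) are necessary for \emph{nondegenerate} multistationarity you must add the (easy but essential) argument that such continua consist only of degenerate steady states, e.g.\ because $f_\kappa$ vanishes identically along $S$ there, so $\Im(df_\kappa|_S)=0\neq S$. Separately, your claim that $h$ has a strict interior extremum on $I$ if and only if $\alpha_1\alpha_2\,v_{2,1}v_{2,2}<0$ is not true class-by-class (for instance, with $\alpha_1>0>\alpha_2$, $\alpha_1+\alpha_2<0$, and a class whose finite endpoint has $x_1\to 0$, the function $h$ is strictly monotone); sufficiency only requires exhibiting one suitable class, but the choice of class depending on the sign of $\alpha_1+\alpha_2$ must actually be made, and the deferred bookkeeping identifying your sign condition with the four zigzag pictures (equivalently, with $\gamma$ and the reactant-polytope slope having opposite nonzero signs) still needs to be written out.
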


\begin{example}[Example~\ref{ex:box-diagram}, continued] \label{ex:box-diagram-illustrated}
The box diagram of 
the network 
$\{ 2A + B {\to} 3A,~ A {\to} B\}$ 
was shown in Example~\ref{ex:box-diagram}, 
and it matches the third of the four forms depicted 
in~\eqref{eq:boxes}.
Hence, Lemma~\ref{lem:2-species-2-rxn-mss}
implies that $G$
is multistationary.
\end{example}

Lemma~\ref{lem:2-species-2-rxn-mss} pertains to nondegenerate multistationarity, and the next result considers the remaining (degenerate) cases of multistationarity (for networks with two species and two reactions).  The following lemma is immediate from~\cite[Theorem~4.5]{joshi-shiu-small} and its proof.

\begin{lemma}[Degenerate multistationarity in networks with 2 species and 2 reactions] \label{lem:2-species-2-rxn-mss-DEGENERATE}
Consider a network $G$ with exactly two species and two reactions, denoted by $y \to y'$ and $\widetilde y \to \widetilde y'$.  Then $G$ is multistationary but \uline{not} nondegenerately multistationary if and only 
if
the two reaction vectors are negative scalar multiples of each other ($y'-y =- \lambda (\widetilde y' - \widetilde y)$ for some $\lambda \in \mathbb{R}_{>0}$) and additionally  one of the following hold:
    \begin{enumerate}
    \item $G$ satisfies the four conditions listed in Lemma~\ref{lem:2-species-2-rxn-mss}, except that the slope of the reactant polytope equals $-1$, 
    \item the reactant complexes are equal ($y= \widetilde y$),
    \item $y'_1 - y_1=\widetilde y_2 -  y_2=0$, or
    \item $\widetilde y_1 -  y_1=y'_2 - y_2=0$.
    \end{enumerate}
\end{lemma}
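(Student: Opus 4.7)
The strategy is to extract the degenerate multistationary cases directly from the classification underlying Lemma~\ref{lem:2-species-2-rxn-mss}, i.e., from~\cite[Theorem~4.5]{joshi-shiu-small} and its proof, which in fact classifies all multistationary two-species, two-reaction networks (not only the nondegenerate ones). Since Lemma~\ref{lem:2-species-2-rxn-mss} captures the nondegenerate half, the only remaining task is to enumerate which of the classification's cases produce multistationarity that is purely degenerate. As a preliminary reduction, I would recall (from~\cite{joshi-shiu-small}) that any multistationary network of this form must satisfy $y'-y=-\lambda(\widetilde y'-\widetilde y)$ for some $\lambda>0$: linearly independent reaction vectors would force each monomial in~\eqref{eq:mass_action_ODE} to vanish separately (impossible in $\R_{>0}^2$), while positive scalar multiples would give a strictly positive combination of a nonzero vector. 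Under this condition, the steady-state equation collapses to the single binomial $x^{\widetilde y - y}=\kappa_2/(\kappa_1\lambda)$, and the remaining analysis is geometric: how the one-parameter steady-state locus meets the one-parameter family of compatibility classes.

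I would then split into cases according to $z:=\widetilde y-y$ and the direction of $y'-y$. When $z=0$ the equation is vacuous at the unique ratio $\kappa_2=\kappa_1\lambda$, so every positive point is a steady state and $df_\kappa\equiv 0$, matching case~(2). When exactly one coordinate of $z$ vanishes and $y'-y$ is parallel to the same axis along which the reactants agree, the steady-state locus is an axis-aligned line that coincides with a compatibility class, yielding an infinite family of steady states with $df_\kappa|_S=0$; this matches cases~(3) and~(4). When both coordinates of $z$ are nonzero and the reactant polytope has slope $-1$, we have $z=(z_1,-z_1)$, and the steady-state locus is a ray through the origin whose slope in the $(x_1,x_2)$-plane is tunable via $\kappa$. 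The zigzag box diagrams in~\eqref{eq:boxes} compatible with slope~$-1$ (the first two forms) force $y'-y$ to have positive slope, so one can tune $\kappa$ to make this ray coincide with the compatibility class through the origin, yielding case~(1). The converse direction follows from the same classification, which shows that these cases, together with the nondegenerate case of Lemma~\ref{lem:2-species-2-rxn-mss}, exhaust all possibilities.

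I expect the main obstacle to be case~(1): one must carefully confirm that the slope-$-1$ condition, together with the surviving zigzag hypothesis, really admits a rate-constant vector producing an \emph{entire ray} of positive steady states (rather than just isolated ones), and that $df_\kappa|_S$ vanishes along this ray. The computation is short but has to be carried out for each zigzag form compatible with slope~$-1$, and requires attention to orientation so that $y'-y$ lies in the appropriate open quadrant for the alignment with the origin-compatibility class to be achievable. With this case pinned down, the remaining verifications for cases~(2)--(4) follow by direct differentiation of $f_\kappa$ at the explicit steady states identified above.
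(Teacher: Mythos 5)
Your proposal is correct and takes essentially the same route as the paper, which derives this lemma directly from the Joshi--Shiu classification \cite[Theorem~4.5]{joshi-shiu-small} and its proof; your case-by-case geometric verification (binomial steady-state equation, coincidence of the steady-state line/ray with a compatibility class, vanishing of $df_\kappa|_S$) matches the computations the paper itself carries out later in the proof of Proposition~\ref{prop:2-species-2-rxns-DEGENERATE}. The only slip is trivial: the steady-state binomial is $x^{\widetilde y - y}=\lambda\kappa_1/\kappa_2$ rather than $\kappa_2/(\kappa_1\lambda)$, which does not affect the argument since the rate constants are arbitrary positive parameters.
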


\section{Multistationarity regions} \label{sec:multi-region}

A ``multistationarity region'' refers to the part of a reaction network's parameter space where multiple steady states arise.  
However, there are several choices for what constitutes the parameters:
\begin{enumerate}[(a)]
    \item vectors of positive rate constants $\kappa$ (such ``multistationarity region'' were studied in~\cite{CFMW,a6maya,FKdY2}), 
    \item total-constant vectors $c$ (as in~\cite{conradi2019multistationarity}), and
    \item pairs $(\kappa; c)$ (as in~\cite{bihan-dickenstein-giaroli, bradford2020identifying, parameter-geography,amir-superlevel}).
    \end{enumerate}
We focus on options~(a) and~(c).  Both options have been called ``multistationarity regions'' in the literature, but here we distinguish between the two situations (see Definition~\ref{def:multistationarity-region} below).

\begin{remark} \label{rem:full-dim}
For networks that are full dimensional -- such as networks with only one species (and at least one reaction) -- option~(b) above is vacuous, so options~(a) and~(c) coincide. 
For certain full-dimensional networks, their multistationarity regions were studied in~\cite{feliu-helmer, joshi2013complete}.
\end{remark}

\begin{definition} \label{def:multistationarity-region}
Let $G$ be a reaction network with conservation-law matrix $W$.
\begin{enumerate}
    \item 
    The {\em multistationarity-allowing region} of $G$ 
    is the set of vectors of positive rate constants $\kappa$ for which $(G,\kappa)$ is multistationary.
    \item 
    The {\em multistationarity-enabling region} of $G$ with respect to $W$ is the set of pairs $(\kappa; c)$ of vectors of positive rate constants $\kappa$ and total-constant vectors $c$
    for which $(G,\kappa)$ admits two or more positive steady states in the stoichiometric compatibility class defined by $c$, as in~\eqref{eq:SCC-rewritten}. 
\end{enumerate}
\end{definition}    

Informally, we refer to both types of regions in Definition~\ref{def:multistationarity-region} as ``multistationarity regions''.  
We also use this term when there is no ambiguity, i.e., for full-dimensional networks (recall Remark~\ref{rem:full-dim}).

\begin{remark} \label{rem:nonempty-region}
A network is multistationary if and only if its 
multistationarity-enabling region is nonempty. 
We also note that, in~\cite{telek-feliu-topological},
a pair $(\kappa;c)$ in the multistationarity-enabling region is said to ``enable multistationarity'', which is what inspired our terminology.
\end{remark}

Examples of networks and their multistationarity regions are shown in Table~\ref{tab:networks}. 
These regions are computed in Sections~\ref{sec:examples} and~\ref{sec:results}.  
Before turning our attention to such computations, we first elucidate some properties of multistationarity regions (Section~\ref{sec:basic-ppties}).

\begin{table}[ht]
	\begin{center}
		\begin{tabular}{l c c}
			\hline
			Network & Multistationarity region& Reference \\
			\hline
                $\{ 2A + B \overset{\kappa_1}{\longrightarrow} 3A,~ A \overset{\kappa_2}{\longrightarrow} B\}$ 
                &
                $ c^2 \kappa_1 > 4 \kappa_2$
                & 
                Proposition~\ref{prop:running-example-part-2}
                \\
                $\{
        	A \stackrel[\kappa_1]{\kappa_2}{\leftrightarrows} A + B, 
	           \quad 
        	2B \stackrel{\kappa_3}{\longrightarrow} 3B, 
	           \quad A 
	           \stackrel[\kappa_5]{\kappa_6}{\leftrightarrows} 2A 
	           \}$
                &
	           $\kappa_2^2 \kappa_5 > 4 \kappa_1 \kappa_3 \kappa_6 $
                &
                Proposition~\ref{prop:mss-region-5-rxns}
                \\
                $\left\{ 0 \stackrel[\kappa_1]{\kappa_2}{\leftrightarrows} A,~
    nA \stackrel{\kappa_3}{\longrightarrow} (n + \ell) A
    \right\}$
    &
        $(n-1)^{n-1} \kappa_2^n > n^n \kappa_1^{n-1} \kappa_3 \ell$
    & Proposition~\ref{prop:joshi}
    \\
                $\left\{0 \overset{\kappa_1}{\longleftarrow} A,~ 2A \overset{\kappa_2}{\longrightarrow} 3A \overset{\kappa_3}{\leftarrow} 4A \right\}$
                &
                $4 \kappa_2^3 > 27 \kappa_1^2 \kappa_3$
                &
                Example~\ref{ex:3-rxns-illustrate-theorem} \\
			$      
            \{ 0 \overset{\kappa_{1L}}{\longleftarrow} A 
        {\overset{\kappa_{1R}}{\longrightarrow }} 
        2A 
       \underset{\kappa_{3L}}{\overset{\kappa_{2R}}{\rightleftarrows}}
        3A  \}~$ 
			&
    $\kappa_{1L}> \kappa_{1R},~
    \kappa_{2R}^2 > 4 (\kappa_{1L}- \kappa_{1R}) \kappa_3$
			&
			Example~\ref{ex:mss-region-2-inequalities}
                \\ 
     \hline
		\end{tabular}
	\end{center}
	\caption{Several networks and the inequalities that define the corresponding multistationarity regions. 
    The third network requires $n\geq 2 $ and $ \ell \geq 1$.
 \label{tab:networks}}
\end{table}

\subsection{Basic properties of multistationarity regions} \label{sec:basic-ppties}
It is well known (and follows easily from 
the relevant 
definitions) that the multistationarity-allowing region
is simply a projection of the multistationarity-enabling region, as follows.
\begin{proposition}[Projection of multistationarity regions] \label{prop:projection}
Let $G$ be a reaction network with $r$ reactions and conservation-law matrix $W$. 
Let $\Sigma$ and $\widetilde \Sigma$ be, respectively, the multistationarity-allowing  and multistationarity-enabling regions of $G$ (with respect to $W$).  Then $\Sigma$ is the image of the projection map from $\widetilde \Sigma$ to $\mathbb{R}^r$ given by $(\kappa; c) \mapsto \kappa$.
\end{proposition}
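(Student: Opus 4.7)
The plan is to unwind the two definitions (multistationarity-allowing versus multistationarity-enabling) and verify both containments between $\Sigma$ and the projection $\pi(\widetilde\Sigma)$, where $\pi(\kappa;c)=\kappa$. No genuine machinery is needed; the content is purely a bookkeeping exercise relating the phrase ``there exists an SCC that contains two or more positive steady states'' to the existence of an appropriate total-constant vector $c$.

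For the inclusion $\Sigma \subseteq \pi(\widetilde\Sigma)$, I would start from an arbitrary $\kappa \in \Sigma$. By Definition~\ref{def:multistationarity-region}(1) together with the definition of multistationarity for mass-action systems, $(G,\kappa)$ admits two distinct positive steady states, call them $x^{(1)}, x^{(2)} \in \mathbb{R}^n_{>0}$, lying in a common stoichiometric compatibility class. Because $W$ is a conservation-law matrix, both points satisfy $Wx^{(1)}=Wx^{(2)}$; set $c := Wx^{(1)} \in \mathbb{R}^d$. Then the SCC $P_c$ defined in~\eqref{eq:SCC-rewritten} contains two distinct positive steady states of $(G,\kappa)$, so $(\kappa;c) \in \widetilde\Sigma$, and $\pi(\kappa;c)=\kappa$.

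For the reverse inclusion $\pi(\widetilde\Sigma) \subseteq \Sigma$, I would take $\kappa = \pi(\kappa;c)$ for some $(\kappa;c)\in \widetilde\Sigma$. By Definition~\ref{def:multistationarity-region}(2), $(G,\kappa)$ admits two or more positive steady states in $P_c$. Since $P_c$ is precisely a stoichiometric compatibility class (by~\eqref{eq:SCC-rewritten}), this witnesses multistationarity of $(G,\kappa)$ in the sense of Definition~\ref{def:multistationarity-region}(1), giving $\kappa \in \Sigma$.

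The only point that might require a sentence of justification is the equivalence between the two descriptions of a stoichiometric compatibility class — the affine one in~\eqref{eq:SCC} and the level-set one in~\eqref{eq:SCC-rewritten} — but this was established already in the passage between~\eqref{eq:SCC} and~\eqref{eq:SCC-rewritten} and amounts to noting that the rows of $W$ form a basis of $S^\perp$. Thus I do not anticipate any real obstacle; the proof is a two- or three-line verification, and the statement could even reasonably be labeled a remark.
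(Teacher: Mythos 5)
Your proof is correct and is exactly the definition-unwinding the paper has in mind: the paper offers no written proof, stating only that the proposition ``follows easily from the relevant definitions,'' and your two inclusions (using $Wx^{(1)}=Wx^{(2)}$ for steady states in a common class, and $\ker W = S$ to identify $P_c$ with a stoichiometric compatibility class) fill in precisely that routine verification. No gaps.
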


\begin{corollary} \label{cor:projection}
Let $G$ be a reaction network.  
Let $\Sigma$ and $\widetilde \Sigma$ be, respectively, the multistationarity-allowing  and multistationarity-enabling regions of $G$ (with respect to some conservation-law matrix~$W$).  If $\widetilde \Sigma$ is connected, then so is $\Sigma$.
\end{corollary}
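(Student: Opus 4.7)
The plan is to deduce this corollary immediately from Proposition~\ref{prop:projection} together with the standard topological fact that the continuous image of a connected set is connected. First I would invoke Proposition~\ref{prop:projection} to identify $\Sigma$ as the image $\pi(\widetilde \Sigma)$ of the projection map
\[
\pi:\mathbb{R}^{r}\times \mathbb{R}^{d}\to \mathbb{R}^{r}, \qquad (\kappa;c)\mapsto \kappa,
\]
restricted to $\widetilde \Sigma$. Since $\pi$ is a coordinate projection, it is continuous (indeed linear).

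Next I would apply the topological lemma: if $X$ is connected and $f:X\to Y$ is continuous, then $f(X)$ is connected. Taking $X=\widetilde \Sigma$ (which is connected by hypothesis) and $f=\pi$ yields that $\Sigma=\pi(\widetilde \Sigma)$ is connected, as desired.

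There is essentially no obstacle here: the corollary is a formal consequence of the preceding proposition and a basic fact from point-set topology. The only subtlety worth flagging in the write-up is that the same argument gives path-connectedness of $\Sigma$ from path-connectedness of $\widetilde \Sigma$ (consistent with Remark~\ref{rem:path-conn}), since continuous images of path-connected sets are path-connected; this is reassuring because the multistationarity regions considered in the paper are open subsets of Euclidean space.
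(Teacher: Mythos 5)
Your proof is correct and follows exactly the intended route: Corollary~\ref{cor:projection} is an immediate consequence of Proposition~\ref{prop:projection} together with the fact that a continuous image of a connected set is connected, which is precisely how the paper obtains it (the corollary is stated without further proof). Your remark about path-connectedness is a fine addition consistent with Remark~\ref{rem:path-conn}.
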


\begin{remark}[Converse of Corollary~\ref{cor:projection}] \label{rem:conjecture}
    Feliu and Telek conjectured that the following converse of Corollary~\ref{cor:projection} is true~\cite[\S3]{telek-feliu-topological}: {\em If $ \Sigma$ is connected, then so is $\widetilde \Sigma$.}
    All results in this work are consistent with this conjecture.
\end{remark}

Next, we show that the choice of conservation-law matrix $W$ does not affect the topology of the resulting 
multistationarity region.  This allows us to say that a multistationary region of some network is, for instance, connected, without specifying a choice of $W$.

\begin{proposition}[Choice of conservation-law matrix $W$]
Let $G$ be a reaction network.  Let $W$ and $W'$ both be conservation-law matrices for $G$.   
Let $\Sigma$ and $\Sigma'$ denote the multistationarity-enabling regions of $G$ with respect to $W$ and $W'$, respectively.  
Then $\Sigma'$ is the image of $\Sigma$ under a linear isomorphism of Euclidean space.  Consequently, 
$\Sigma$ and $\Sigma'$ are homeomorphic.
\end{proposition}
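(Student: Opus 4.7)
The plan is to exhibit an explicit linear isomorphism of $\mathbb{R}^r \times \mathbb{R}^d$ that carries $\Sigma$ bijectively onto $\Sigma'$; the homeomorphism conclusion then follows automatically since linear isomorphisms of Euclidean spaces are homeomorphisms.

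First, I would use the defining property of conservation-law matrices: both $W$ and $W'$ are $(d \times n)$-matrices whose rows form a basis of the orthogonal complement $S^\perp$ of the stoichiometric subspace $S$, where $d = n - \dim(S)$. Since the rows of $W$ and the rows of $W'$ are two bases of the same $d$-dimensional subspace of $\mathbb{R}^n$, there exists a (unique) invertible $(d \times d)$-matrix $M$ such that $W' = MW$.

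Next, I would verify that the stoichiometric compatibility classes are preserved under the change of basis. Specifically, for every $c \in \mathbb{R}^d$, invertibility of $M$ yields
\begin{equation*}
\{ x \in \mathbb{R}_{\geq 0}^n \mid Wx = c \} ~=~ \{ x \in \mathbb{R}_{\geq 0}^n \mid W'x = Mc \}~,
\end{equation*}
since $W'x = Mc$ iff $MWx = Mc$ iff $Wx = c$. Hence, for any vector of positive rate constants $\kappa$, the mass-action system $(G,\kappa)$ admits two or more positive steady states in the class parametrized by $c$ (via $W$) if and only if it admits two or more positive steady states in the class parametrized by $Mc$ (via $W'$).

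Finally, I would define the map $\Phi : \mathbb{R}^r \times \mathbb{R}^d \to \mathbb{R}^r \times \mathbb{R}^d$ by $\Phi(\kappa, c) = (\kappa, Mc)$. This map is a linear isomorphism (with inverse $(\kappa, c') \mapsto (\kappa, M^{-1} c')$), and by the equivalence just established it sends $\Sigma$ bijectively onto $\Sigma'$. Since any linear isomorphism of Euclidean space is a homeomorphism, $\Sigma$ and $\Sigma'$ are homeomorphic, completing the proof. There is really no main obstacle here: the argument is essentially a change-of-basis computation, and the only thing to be careful about is checking that the stoichiometric compatibility classes match up setwise (not merely as cosets) and that the map $\Phi$ fixes the $\kappa$-coordinate so that the ``multistationary'' condition on $(G,\kappa)$ is unaffected.
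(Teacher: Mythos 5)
Your proof is correct and follows essentially the same route as the paper: both extract the invertible matrix $M$ with $W' = MW$ from the fact that the rows of $W$ and $W'$ are bases of $S^{\perp}$, and both use the map $(\kappa, c) \mapsto (\kappa, Mc)$ as the linear isomorphism carrying $\Sigma$ onto $\Sigma'$. Your verification that the compatibility classes coincide setwise simply fills in the step the paper calls ``straightforward to check.''
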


\begin{proof}
Let $W$ and $W'$ be conservation-law matrices of a network $G$.  
The rows of both matrices form bases of 
$S^{\perp}$ (where $S$ is the stoichiometric subspace of $G$).  
Hence, there exists an invertible $d \times d$ matrix $M$ such that $W'=MW$.  (Here, $d = n - \dim (S)$, where $n$ is the number of species.)  
Now it is straightforward to check from the relevant 
definitions that the mapping 
$(\kappa; c) \mapsto (\kappa; Mc)$ 
defines a bijection from $\Sigma$ to $\Sigma'$.
\end{proof}

\subsection{Examples} \label{sec:examples}
In this section, we compute several multistationarity regions.  
Our proofs 
use results on polynomials from Section~\ref{sec:background-polynomials}, and they illustrate key ideas that we use in the next section to prove our main results.
We begin with our running example.  

%

\begin{proposition}[Multistationarity regions of the running example] \label{prop:running-example-part-2}
The multistationarity-enabling region of the network
$ \{ 2A + B \overset{\kappa_1}{\to} 3A,~ A \overset{\kappa_2}{\to} B\}$ 
with respect to the conservation-law matrix $W=[1 ~1]$, is the following connected set:
\begin{align} \label{eq:multi-region-for-running-ex}
    \left\{ (\kappa_1, \kappa_2, c) \in \mathbb{R}^3_{> 0} \mid
    c^2 \kappa_1 > 4 \kappa_2
    \right\}~,
\end{align}
and hence the 
multistationarity-allowing region equals $\mathbb{R}^2_{> 0}$.    
\end{proposition}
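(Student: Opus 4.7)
The plan is to compute the steady-state equations explicitly using the conservation law, reduce to a single univariate polynomial, then apply the trinomial machinery (or equivalently the quadratic discriminant from Example~\ref{ex:discriminant-quadratic}) to identify when two positive roots exist.

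First I would set $f_\kappa(x)=0$ in the ODEs~\eqref{eq:ODEs-running-example}. The two right-hand sides are negatives of each other, so at a positive steady state we obtain the single equation $\kappa_1 x_1 x_2 = \kappa_2$ after dividing by $x_1>0$. Substituting the conservation-law constraint $x_2 = c - x_1$ produces the quadratic
\begin{equation*}
q(x_1) \;=\; \kappa_1 x_1^2 - \kappa_1 c\, x_1 + \kappa_2~.
\end{equation*}
Positive steady states of $(G,\kappa)$ inside $P_c$ correspond bijectively to roots $x_1\in(0,c)$ of $q$, since then automatically $x_2 = c-x_1 > 0$. Because the sum of the roots of $q$ (when real) equals $c>0$ and the product equals $\kappa_2/\kappa_1>0$, if $q$ has two real roots then both are positive and both lie in $(0,c)$. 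So having two positive steady states is equivalent to $q$ having two distinct real roots, which by the standard quadratic discriminant (Example~\ref{ex:discriminant-quadratic}) happens precisely when $\kappa_1^2 c^2 - 4\kappa_1 \kappa_2 > 0$, i.e.\ $c^2 \kappa_1 > 4\kappa_2$.

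Next I would argue connectedness of the region~\eqref{eq:multi-region-for-running-ex}. The cleanest route is Lemma~\ref{lem:signomial}: view the region as $g^{-1}(\mathbb{R}_{<0})$ for the polynomial function $g(\kappa_1,\kappa_2,c) = 4\kappa_2 - c^2 \kappa_1$ on $\mathbb{R}^3_{>0}$. This $g$ has exactly one negative coefficient, so Lemma~\ref{lem:signomial} yields connectedness immediately. (As a sanity check, one can also exhibit explicit paths: any point $(\kappa_1,\kappa_2,c)$ in the region can be connected, via a straight-line path in $c$, to a point with very large $c$, and then any two such points can be joined inside the region.)

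Finally, the multistationarity-allowing region is obtained by projecting out $c$ via Proposition~\ref{prop:projection}. Given any $(\kappa_1,\kappa_2)\in\mathbb{R}^2_{>0}$, choosing $c > 2\sqrt{\kappa_2/\kappa_1}$ places $(\kappa_1,\kappa_2,c)$ in the multistationarity-enabling region, so the projection is all of $\mathbb{R}^2_{>0}$. The only step requiring any real care is verifying that both roots of $q$ automatically lie in $(0,c)$ so that the corresponding $x_2$ values are positive; once this is in hand, the discriminant computation and the application of Lemma~\ref{lem:signomial} are routine.
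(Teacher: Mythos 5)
Your proposal is correct and follows essentially the same route as the paper: reduce to a quadratic in $x_1$ via the conservation law, characterize multistationarity by the sign of the discriminant, apply Lemma~\ref{lem:signomial} to $g(\kappa_1,\kappa_2,c)=4\kappa_2-c^2\kappa_1$ for connectedness, and project via Proposition~\ref{prop:projection}. The only cosmetic difference is that you verify positivity of both roots through Vieta's formulas (sum $c>0$, product $\kappa_2/\kappa_1>0$), whereas the paper writes the two roots explicitly and checks them directly; this is a slightly cleaner justification of the same step.
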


\begin{proof}
From this network's ODEs~\eqref{eq:ODEs-running-example} and the conservation law from $W$, the positive steady states in $P_c$ are the positive intersection points $(x_1,x_2) \in \mathbb{R}^2_{>0}$ of a hyperbola and a line, defined by:
    \begin{align} \label{eq:steady-state-equations-running-ex}
        \kappa_{1}x_{1} x_{2} - \kappa_{2} ~&=~0\\
        x_1 + x_2 ~&=~ c~. \notag
    \end{align}
We substitute $x_2=c-x_1$, from the second equation in~\eqref{eq:steady-state-equations-running-ex}, into the first equation to obtain $h(x_1):= -x_1^2+ cx_1 - \kappa_1/\kappa_2 = 0$.  

Consider the discriminant of $h$, which is $\Delta(h)= c^2- 4 \kappa_2/\kappa_1$.  When $\Delta(h) \leq 0$ (i.e., $c^2 \kappa_1 \leq 4 \kappa_2$), $h$ has at most one real root (counted without multiplicity) and hence the system~\eqref{eq:steady-state-equations-running-ex} does not admit multiple positive roots.
On the other hand, when $\Delta(h) >0$ (i.e., $c^2 \kappa_1 > 4 \kappa_2$), it is straightforward to check that the following are positive steady states in $P_c$:
\begin{align*}
    \left( 
    \frac{c+\sqrt{\Delta(h)}}{2}, ~
    \frac{c-\sqrt{\Delta(h)}}{2}
    \right)
    \quad
    {\rm and}
    \quad 
    \left( 
    \frac{c-\sqrt{\Delta(h)}}{2}, ~
    \frac{c+\sqrt{\Delta(h)}}{2}
    \right)~.
\end{align*}

Next, connectedness of~\eqref{eq:multi-region-for-running-ex} follows from Lemma~\ref{lem:signomial} (applied to 
$g(\kappa_1, \kappa_2, c) :=4 \kappa_2 - c^2 \kappa_1$).
Finally, we apply Proposition~\ref{prop:projection} to~\eqref{eq:multi-region-for-running-ex} to see that the multistationarity-allowing region is $\mathbb{R}^2_{> 0}$.    
\end{proof}


\begin{remark} \label{rem:closely-related-running-ex}
The following network is obtained from the running example by making one reaction reversible:
    $H =\{ 2A + B {\to} 3A,~ A {\rightleftarrows} B\}$.  The multistationarity-enabling region of $H$ was shown to be connected in~\cite{telek-feliu-topological} (and so Corollary~\ref{cor:projection} implies that the multistationarity-allowing region is too).
\end{remark}

Next, we analyze the following (full-dimensional) network:
\begin{align} \label{eq:network-5-reactions}
\left\{
	A \stackrel[\kappa_1]{\kappa_2}{\leftrightarrows} A + B, 
	\quad 
	2B \stackrel{\kappa_3}{\rightarrow} 3B, 
	\quad A 
	\stackrel[\kappa_5]{\kappa_6}{\leftrightarrows} 2A 
\right\}~.
\end{align}  
This network~\eqref{eq:network-5-reactions} is obtained by 
removing the reaction labeled by $\kappa_4$ from a network in recent work of Joshi, Kaihnsa, Nguyen, and Shiu~\cite[Example 2.6]{joshi-kaihnsa-nguyen-shiu-1}.  

\begin{proposition}
\label{prop:mss-region-5-rxns}
The multistationarity region 
of network~\eqref{eq:network-5-reactions} 
is the following connected set:
\begin{align} \label{eq:multi-region-for-5-rxn-ex}
    \left\{ (\kappa_1, \kappa_2, \kappa_3, \kappa_5, \kappa_6) \in \mathbb{R}^5_{> 0} \mid
         \kappa_2^2 \kappa_5 > 4 \kappa_1 \kappa_3 \kappa_6 
    \right\}~.
\end{align}
\end{proposition}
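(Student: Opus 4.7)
The plan is to mimic the proof of Proposition~\ref{prop:running-example-part-2} almost verbatim, exploiting the fact that network~\eqref{eq:network-5-reactions} is full dimensional (so there are no conservation laws to impose, and by Remark~\ref{rem:full-dim} the two notions of multistationarity region coincide) and that the mass-action ODEs decouple nicely.

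First I would write down the ODEs from~\eqref{eq:mass_action_ODE}. Setting $x_1 = [A]$ and $x_2 = [B]$, the reactions $A \leftrightarrows A+B$ and $2B \to 3B$ leave $A$ unchanged, so
\begin{align*}
\dot x_1 &~=~ \kappa_5 x_1 - \kappa_6 x_1^2 ~=~ x_1(\kappa_5 - \kappa_6 x_1)~,\\
\dot x_2 &~=~ \kappa_1 x_1 - \kappa_2 x_1 x_2 + \kappa_3 x_2^2~.
\end{align*}
Any positive steady state has $x_1 > 0$, so $\dot x_1 = 0$ forces $x_1 = \kappa_5/\kappa_6$. Substituting this into $\dot x_2 = 0$ and clearing a factor of $\kappa_6$, the positive steady states of $(G,\kappa)$ are in bijection with the positive roots of the trinomial
\[
h(x_2) ~:=~ \kappa_3 \kappa_6 \, x_2^2 - \kappa_2 \kappa_5 \, x_2 + \kappa_1 \kappa_5~.
\]

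Next, I would invoke the quadratic discriminant (Example~\ref{ex:discriminant-quadratic} applied after normalizing the leading coefficient), which gives
\[
\Delta(h) ~=~ (\kappa_2 \kappa_5)^2 - 4 \kappa_1 \kappa_3 \kappa_5 \kappa_6 ~=~ \kappa_5 \bigl( \kappa_2^2 \kappa_5 - 4 \kappa_1 \kappa_3 \kappa_6 \bigr)~.
\]
Because all rate constants are positive, Vieta's formulas tell us that the sum $\kappa_2 \kappa_5/(\kappa_3 \kappa_6)$ and product $\kappa_1 \kappa_5/(\kappa_3 \kappa_6)$ of the roots of $h$ are both positive; hence $h$ has two (distinct) positive real roots precisely when $\Delta(h) > 0$, which is equivalent to $\kappa_2^2 \kappa_5 > 4 \kappa_1 \kappa_3 \kappa_6$. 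This identifies the multistationarity region as the set~\eqref{eq:multi-region-for-5-rxn-ex}.

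Finally, for connectedness I would apply Lemma~\ref{lem:signomial} to the polynomial function
\[
g(\kappa_1,\kappa_2,\kappa_3,\kappa_5,\kappa_6) ~:=~ 4 \kappa_1 \kappa_3 \kappa_6 - \kappa_2^2 \kappa_5~,
\]
which has exactly one negative coefficient. Thus $g^{-1}(\mathbb{R}_{<0})$ is connected, which is precisely the set~\eqref{eq:multi-region-for-5-rxn-ex}. No genuine obstacle is expected: the only minor subtlety is verifying that both roots of $h$ are automatically positive when $\Delta(h) > 0$ (handled by Vieta as above) and that the $x_1 = 0$ branch contributes no positive steady states (immediate from $\dot x_2|_{x_1=0} = \kappa_3 x_2^2$).
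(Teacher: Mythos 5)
Your proposal is correct and follows essentially the same route as the paper: reduce to $x_1=\kappa_5/\kappa_6$ via the first ODE, count positive roots of the resulting quadratic in $x_2$, and apply Lemma~\ref{lem:signomial} to $4\kappa_1\kappa_3\kappa_6-\kappa_2^2\kappa_5$ for connectedness. The only cosmetic difference is that you use the elementary quadratic discriminant together with Vieta's formulas where the paper invokes Proposition~\ref{prop:num-roots-trinomial}, which specializes to exactly the same criterion when $n=2$, $k=1$.
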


\begin{proof}
    The network~\eqref{eq:network-5-reactions} generates the following mass-action ODEs~\eqref{eq:mass_action_ODE}:
\begin{align*}
&\frac{dx_1}{dt} ~=~ \kappa_5x_1 - \kappa_6x_1^2\\
&\frac{dx_2}{dt}~ =~ \kappa_1x_1-\kappa_2x_1x_2+\kappa_3x_2^2~.
\end{align*}
The first equation readily yields that $x_1^*=\kappa_5/\kappa_6$ for all positive steady states $(x_1^*, x_2^*)$ (this property is called ``absolute concentration robustness''~\cite{shinar-feinberg}). 
Hence, 
the positive steady states of the mass-action system correspond to positive roots $x_2^*$ of the following 
quadratic in $x_2$:
\begin{align} \label{eq:polyn-5-rxn}
    \kappa_3x_2^2
    -\kappa_2 \left( \kappa_5/\kappa_6 \right) x_2
    +
    \kappa_1 \left( \kappa_5/\kappa_6 \right)~.
\end{align}
Proposition~\ref{prop:num-roots-trinomial} implies that the polynomial~\eqref{eq:polyn-5-rxn} has multiple positive roots if and only if the inequality
$4 \left(\frac{\kappa_1 \kappa_5}{\kappa_3 \kappa_6} \right) - \left( \frac{\kappa_2 \kappa_5}{ \kappa_3 \kappa_6}\right)^2 <0$ holds, which is equivalent to the 
inequality in~\eqref{eq:multi-region-for-5-rxn-ex}.  
Finally, connectedness of the region~\eqref{eq:multi-region-for-5-rxn-ex} comes from applying Lemma~\ref{lem:signomial} to 
$g(\kappa_1, \kappa_2,  \kappa_3, \kappa_5, \kappa_6) :=
4 \kappa_1 \kappa_3 \kappa_6- \kappa_2^2 \kappa_5$.
\end{proof}

We end this section by recalling the multistationarity region for a family of multistationary networks with only one species.  This region was computed by Joshi, as follows~\cite[Lemma~4.3]{joshi2013complete} (and connectedness is immediate from Lemma~\ref{lem:signomial}).

\begin{proposition}[One-species networks with one non-flow reaction] 
\label{prop:joshi}
For the following network:
\begin{align*}
    \left\{ 0 \stackrel[\kappa_1]{\kappa_2}{\leftrightarrows} A,~
    nA \stackrel{\kappa_3}{\to} (n + \ell) A
    \right\}~, \quad 
    {\rm where~} n\geq 2 {\rm~and~} \ell \geq 1~, 
\end{align*}
the multistationarity region 
is the following connected set:
\begin{align*} 
    \left\{ (\kappa_1, \kappa_2, \kappa_3) \in \mathbb{R}^3_{> 0} \mid
    (n-1)^{n-1} \kappa_2^n > n^n \kappa_1^{n-1} \kappa_3 \ell
    \right\}~.
\end{align*}
\end{proposition}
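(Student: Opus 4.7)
The network is full-dimensional (one species), so the multistationarity-enabling and multistationarity-allowing regions coincide, and a parameter vector $(\kappa_1,\kappa_2,\kappa_3)$ lies in the multistationarity region precisely when the mass-action ODE has two or more positive steady states. The first step is to write down that ODE. Letting $x$ denote the concentration of $A$, mass-action gives
\[
\dot x \;=\; \kappa_1 - \kappa_2 x + \ell \kappa_3\, x^n,
\]
so positive steady states correspond bijectively to positive roots of the trinomial $p(x) := \ell\kappa_3\, x^n - \kappa_2 x + \kappa_1$. Dividing by the positive constant $\ell\kappa_3$ yields the normalized trinomial
\[
h(x) \;=\; x^n - \tfrac{\kappa_2}{\ell\kappa_3}\, x \;+\; \tfrac{\kappa_1}{\ell\kappa_3},
\]
which is exactly in the form $x^n - c x^k + b$ required by Proposition~\ref{prop:num-roots-trinomial}, with $k = 1$, $b = \kappa_1/(\ell\kappa_3) > 0$, and $c = \kappa_2/(\ell\kappa_3) > 0$.

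Next, I would apply Proposition~\ref{prop:num-roots-trinomial} to count the positive roots. With $d = \gcd(n,1) = 1$, we have $N = n$ and $K = 1$, and the discriminant proxy becomes $D(h) = n^n b^{\,n-1} - (n-1)^{n-1} c^n$. The proposition says $h$ has two positive roots if and only if $D(h) < 0$. Substituting the values of $b$ and $c$, this reads
\[
n^n \left( \tfrac{\kappa_1}{\ell\kappa_3} \right)^{n-1} \;<\; (n-1)^{n-1} \left( \tfrac{\kappa_2}{\ell\kappa_3} \right)^n,
\]
and multiplying through by the positive quantity $(\ell\kappa_3)^n$ gives $n^n \kappa_1^{n-1} \kappa_3 \ell < (n-1)^{n-1}\kappa_2^n$, which is the advertised inequality.

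Finally, for connectedness I would apply Lemma~\ref{lem:signomial} to the polynomial function
\[
g(\kappa_1,\kappa_2,\kappa_3) \;:=\; n^n\, \ell\, \kappa_1^{n-1}\kappa_3 \;-\; (n-1)^{n-1}\kappa_2^n\,,
\]
whose two monomials have opposite signs (so exactly one coefficient is negative). Lemma~\ref{lem:signomial} then guarantees that $g^{-1}(\R_{<0}) = \{g < 0\}$ is connected, and this set is precisely the region described in the statement.

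The argument is almost entirely bookkeeping; I do not anticipate a serious obstacle. The one spot requiring care is aligning conventions: verifying that $\kappa_1$ labels $0 \to A$ and $\kappa_2$ labels $A \to 0$ (from the notation $\stackrel[\kappa_1]{\kappa_2}{\leftrightarrows}$ used elsewhere in the paper), and checking the sign and multiplier bookkeeping so that the inequality $D(h) < 0$ clears denominators to the exact form stated in the proposition rather than an equivalent rearrangement.
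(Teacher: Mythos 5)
Your proof is correct and follows essentially the paper's own route: the paper obtains the region by citing Joshi and observes (Example~\ref{ex:joshi-again}) that it is exactly the specialization of the trinomial argument in the proof of Theorem~\ref{thm:1-species-up-to-3-rxns} (reduce the steady-state equation to $x^n - cx + b$, apply Proposition~\ref{prop:num-roots-trinomial}, clear denominators), with connectedness immediate from Lemma~\ref{lem:signomial} applied to a polynomial with a single negative term. Your sign conventions for $\kappa_1,\kappa_2$ and the bookkeeping with $\ell\kappa_3$ all check out, so nothing is missing.
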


In the next section, we generalize Proposition~\ref{prop:joshi}
to any one-species network with up to three reactions
(see Theorem~\ref{thm:1-species-up-to-3-rxns}).

\section{Main results} \label{sec:results}
In this section, we give an example of a
multistationarity region that is disconnected (Proposition~\ref{prop:disconnected}).  
The corresponding network has six reactions and only one species.  In contrast, we show that the multistationarity region is connected for all networks with one species and up to three reactions (Theorem~\ref{thm:1-species-up-to-3-rxns})
and all networks with two species and up to two reactions (Theorem~\ref{thm:2-species}).  

\subsection{Networks with one species} \label{sec:1-species}
This subsection considers networks with only one species.  
Such networks (with at least one reaction ) are full dimensional.

\begin{proposition} \label{prop:disconnected}
The multistationarity region of the following network is disconnected:
\begin{align*}
        \{ 0 \overset{\kappa_{1L}}{\longleftarrow} A 
        \underset{\kappa_{2L}}{\overset{\kappa_{1R}}{\rightleftarrows}} 
        2A 
       \underset{\kappa_{3L}}{\overset{\kappa_{2R}}{\rightleftarrows}}
        3A \overset{\kappa_{3R}}{\longrightarrow} 4A \}~.
\end{align*}
\end{proposition}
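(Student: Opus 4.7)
The plan is to write the multistationarity region explicitly and observe that it splits along a hyperplane. Since the network has a single species, the mass-action ODE~\eqref{eq:mass_action_ODE} collapses to the scalar equation
\[
\dot{x} \;=\; (\kappa_{1R}-\kappa_{1L})\,x \;+\; (\kappa_{2R}-\kappa_{2L})\,x^{2} \;+\; (\kappa_{3R}-\kappa_{3L})\,x^{3},
\]
so (after dividing by $x$) positive steady states correspond bijectively to positive roots of the quadratic
\[
q(x) \;:=\; \alpha\, x^{2} + \beta\, x + \gamma, \qquad \alpha:=\kappa_{3R}-\kappa_{3L},\ \beta:=\kappa_{2R}-\kappa_{2L},\ \gamma:=\kappa_{1R}-\kappa_{1L}.
\]

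Next, I would characterize when $q$ has two distinct positive roots. This requires $\alpha \neq 0$ together with a strict sign alternation in the coefficients: using the discriminant and the signs of the sum $-\beta/\alpha$ and product $\gamma/\alpha$ of the roots (equivalently, applying Proposition~\ref{prop:num-roots-trinomial} to $q/\alpha$), exactly one of the following configurations must hold:
\[
(\mathrm{I})\ \alpha>0,\ \beta<0,\ \gamma>0,\ \beta^{2}>4\alpha\gamma; \qquad (\mathrm{II})\ \alpha<0,\ \beta>0,\ \gamma<0,\ \beta^{2}>4\alpha\gamma.
\]
Consequently the multistationarity region decomposes as $M = M_{\mathrm{I}} \sqcup M_{\mathrm{II}}$, with $M_{\mathrm{I}} \subseteq \{\kappa_{3R}>\kappa_{3L}\}$ and $M_{\mathrm{II}} \subseteq \{\kappa_{3R}<\kappa_{3L}\}$.

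The two open half-spaces of $\mathbb{R}^{6}_{>0}$ cut out by $\kappa_{3R}\lessgtr \kappa_{3L}$ are separated by the hyperplane $H := \{\kappa_{3R}=\kappa_{3L}\}\cap \mathbb{R}^{6}_{>0}$, and on $H$ we have $\alpha=0$, so $q$ is at most linear and hence has at most one positive root; thus $M \cap H = \varnothing$. To conclude, I would exhibit one parameter vector in each piece: for example, $\kappa_{1L}=\kappa_{3L}=1$, $\kappa_{1R}=\kappa_{3R}=2$, $\kappa_{2L}=10$, $\kappa_{2R}=1$ gives $(\alpha,\beta,\gamma)=(1,-9,1)$ with $\beta^{2}=81>4=4\alpha\gamma$, so this lies in $M_{\mathrm{I}}$; swapping the $L$- and $R$-rates provides a point of $M_{\mathrm{II}}$. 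Since $M$ is then a disjoint union of two nonempty open sets sitting inside disjoint open half-spaces, $M$ is disconnected.

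No deep machinery is required; the only point that needs careful verification is that the sign configurations forced on $(\alpha,\beta,\gamma)$ by having two positive roots are mutually exclusive and that the separating hyperplane $\{\alpha=0\}$ carries no multistationary parameters. The key conceptual observation is therefore that the leading coefficient of $q$ has a globally determined sign on each component of $M$.
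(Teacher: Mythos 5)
Your proposal is correct and follows essentially the same route as the paper: reduce to the sign pattern of the coefficients of the cubic (equivalently, the quadratic after dividing by $x$), note that two positive steady states force one of two mutually exclusive sign configurations lying in disjoint open sets, and exhibit an explicit parameter point realizing each. The paper phrases the containment via Descartes' rule of signs and uses the witnesses $(x-1)(x-2)$ and $-(x-1)(x-2)$, but these are only cosmetic differences from your discriminant/Vieta argument and your separating hyperplane $\kappa_{3R}=\kappa_{3L}$.
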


\begin{proof}
The mass-action ODE of this network (where we write $x$ for $x_1$) is:
\begin{align} \label{eq:ODE-6-rxn}
    \frac{dx}{dt} ~=~ 
        \left( \kappa_{1R} - \kappa_{1L} \right) x
        + 
        \left( \kappa_{2R} - \kappa_{2L} \right) x^2
        +
        \left( \kappa_{3R} - \kappa_{3L} \right) x^3~.
\end{align}
By a straightforward application of Descartes' rule of signs, the multistationarity region is a subset of the following disjoint union of two (nonempty) open 
sets $U$ and $V$:
\begin{align*}
    \underbrace{
    \left\{ \kappa \in \mathbb{R}^6_{>0} \mid
        \kappa_{1R} > \kappa_{1L},~
        \kappa_{2R} < \kappa_{2L},~
        \kappa_{3R} > \kappa_{3L}
    \right\}}_{U}
    \dot{\bigcup}
    \underbrace{
    \left\{ \kappa \in \mathbb{R}^6_{>0} \mid
        \kappa_{1R} < \kappa_{1L},~
        \kappa_{2R} > \kappa_{2L},~
        \kappa_{3R} < \kappa_{3L}
    \right\}}_{V} ~,
\end{align*}
where $\kappa:= ( \kappa_{1L}, \kappa_{1R}, 
 \kappa_{2L}, \kappa_{2R},
 \kappa_{3L}, \kappa_{3R})$. 
It now suffices to show that the multistationary region contains a point in $U$ and also contains a point in $V$.  Accordingly, we choose the following:
\begin{align*}
    \kappa(U) ~:=~(1,3,4,1,1,2) \quad \mathrm{and} \quad 
    \kappa(V) ~:=~(3,1,1,4,2,1)~.
\end{align*}
For these vectors of rate constants, the right-hand side of the ODE
~\eqref{eq:ODE-6-rxn} is, respectively, $(x-1)(x-2)$ or $-(x-1)(x-2)$, each of which has $2$ positive roots.  Hence, 
$\kappa(U) \in U$ 
and 
$\kappa(V) \in V$ 
are both in the multistationarity region, which completes the proof.
\end{proof}

\begin{remark}
    As mentioned in the Introduction, only one other network is known to have disconnected multistationarity region: a network modeling ``allosteric reciprocal enzyme regulation''~\cite[Figure~2c]{telek-feliu-topological}.  This regulation model has $10$ species and $10$ reactions, and the proof of Telek and Feliu showing the region is disconnected is somewhat involved.  In contrast, our network is simpler (only $1$ species and $6$ reactions), and the proof of Proposition~\ref{prop:disconnected} is short. 
\end{remark}

\begin{theorem}[One species and up to three reactions] \label{thm:1-species-up-to-3-rxns}
For every reaction network with only one species and up to three reactions, the multistationarity region is connected.
\end{theorem}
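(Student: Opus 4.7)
The plan is to reduce to the case of exactly three reactions and then exhibit the multistationarity region as the negative set of a polynomial function with a single negative coefficient, so that Lemma~\ref{lem:signomial} delivers connectedness. For networks with zero, one, or two reactions, Lemma~\ref{lem:1-species-mss}(1) gives that the network is not multistationary, so the region is empty (and thus connected). Hence I may assume $r=3$.

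Write the three reactions as $m_i A \to p_i A$ with $m_1 \leq m_2 \leq m_3$, and set $a_i := \kappa_i (p_i - m_i)$. By Lemma~\ref{lem:1-species-mss}(2), the network is multistationary only when $m_1 < m_2 < m_3$ and $(a_1, a_2, a_3)$ alternates in sign as $(+,-,+)$ or $(-,+,-)$. The mass-action ODE factors as
\begin{align*}
\frac{dx}{dt} ~=~ a_1 x^{m_1} + a_2 x^{m_2} + a_3 x^{m_3} ~=~ x^{m_1}\bigl(a_1 + a_2 x^{k} + a_3 x^{n}\bigr),
\end{align*}
where $k := m_2 - m_1$ and $n := m_3 - m_1$ satisfy $0 < k < n$. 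Positive steady states correspond to positive roots of the bracketed trinomial. In both sign patterns, multiplying by $\pm 1$ and dividing by the (positive) leading coefficient puts the trinomial in the form $x^n - c x^k + b$ of Proposition~\ref{prop:num-roots-trinomial}, with $b = B/A$ and $c = C/A$, where $A, B, C$ are the positive monomials $A = \kappa_3 |p_3 - m_3|$, $B = \kappa_1 |p_1 - m_1|$, $C = \kappa_2 |p_2 - m_2|$.

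Applying Proposition~\ref{prop:num-roots-trinomial}, multistationarity is equivalent to $n^N b^{N-K} < (n-k)^{N-K} k^K c^N$, where $d = \gcd(n,k)$, $N = n/d$, $K = k/d$. Substituting $b = B/A$, $c = C/A$ and multiplying through by the positive factor $A^N$, the multistationarity region equals
\begin{align*}
\Bigl\{ \kappa \in \R^3_{>0} ~:~ g(\kappa) ~:=~ n^N B^{N-K} A^K ~-~ (n-k)^{N-K} k^K C^N ~<~ 0 \Bigr\}.
\end{align*}
Expanded, $g$ has exactly two monomial terms: a positive multiple of $\kappa_1^{N-K} \kappa_3^K$ and a negative multiple of $\kappa_2^N$ (note $0 < K < N$, so all three exponents $N-K$, $K$, $N$ are positive integers, making $g$ a genuine polynomial function). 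Since $g$ has at most one negative coefficient, Lemma~\ref{lem:signomial} yields that $g^{-1}(\R_{<0})$, the multistationarity region, is connected.

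The only delicate step is the sign bookkeeping across Lemma~\ref{lem:1-species-mss}(2)(a) and (b); but in both cases the normalization produces a trinomial with signature $(+,-,+)$, and the essential structural feature — that $\kappa_1$ and $\kappa_3$ appear jointly in one monomial while $\kappa_2$ appears alone in the other — is preserved, so Lemma~\ref{lem:signomial} applies uniformly. I do not foresee further obstacles.
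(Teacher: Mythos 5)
Your proposal is correct and follows essentially the same route as the paper's proof: reduce to the multistationary three-reaction case via Lemma~\ref{lem:1-species-mss}, normalize the mass-action ODE to a trinomial $x^n - cx^k + b$, characterize multistationarity by the inequality $n^N(\kappa_1\ell_1)^{N-K}(\kappa_3\ell_3)^K < (n-k)^{N-K}k^K(\kappa_2\ell_2)^N$ via Proposition~\ref{prop:num-roots-trinomial}, and conclude connectedness from Lemma~\ref{lem:signomial} since the defining polynomial has a single negative term. The region you obtain coincides with~\eqref{eq:multi-set}, and your sign bookkeeping in the two cases matches the paper's treatment.
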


\begin{proof}
Let $G$ be a $1$-species network with up to $3$ reactions.  If the multistationarity region of $G$ is empty, then this region is vacuously connected.  We therefore assume that $G$ is multistationary. 
By Lemma~\ref{lem:1-species-mss}, $G$ must have one of the following forms (which correspond to sign sequences $(+,-,+)$ and $(-,+,-)$, respectively):
\begin{align*}
    G~&=~ 
    \left\{
    m_1 A \overset{\kappa_1}{\to} (m_1 + \ell_1) A,~
    (m_1+k) A \overset{\kappa_2}{\to} (m_1 + k - \ell_2) A,~
    (m_1+n) A \overset{\kappa_3}{\to} (m_1 + n +  \ell_3) A
    \right\} 
    \quad \textrm{or} \\
    G~&=~ 
    \left\{
    m_1 A \overset{\kappa_1}{\to} (m_1 - \ell_1) A,~
    (m_1+k) A \overset{\kappa_2}{\to} (m_1 + k + \ell_2) A,~
    (m_1+n) A \overset{\kappa_3}{\to} (m_1 + n -  \ell_3) A
    \right\}~,    
\end{align*}
where $0 < k < n$ and $\ell_1, \ell_2, \ell_3 \geq 1$.

In the first case, the mass-action ODE is as follows (where we let $x:=x_1$ to avoid extra indices):
    \begin{align*}
        \frac{dx}{dt} ~=~
        \kappa_1 \ell_1 x^{m_1} - 
        \kappa_2 \ell_2 x^{m_1+k} +
        \kappa_3 \ell_3 x^{m_1+n}~.  
    \end{align*}
In the second case, the ODE is the same, except that the right-hand side is negated.  It is now straightforward to see that (in both cases) multistationarity occurs precisely when the following trinomial has more than one positive root: 
\begin{align*}
    x^n - \left( \frac{\kappa_2 \ell_2}{\kappa_3 \ell_3} \right) x^k + \left( \frac{\kappa_1 \ell_1}{\kappa_3 \ell_3} \right)~.
\end{align*}
So, by 
Proposition~\ref{prop:num-roots-trinomial} and straightforward algebraic manipulations, we conclude that the multistationarity region of $G$ is the following set: 
\begin{align} \label{eq:multi-set}
    \left\{
    (\kappa_1, \kappa_2, \kappa_3) \in \mathbb{R}^3_{>0}
    \mid 
    n^{N} (\kappa_1 {\ell_1})^{N-K}
        (\kappa_3 {\ell_3})^K 
        -
        (n-k)^{N-K} k^{K} (\kappa_2 {\ell_2})^{N}
        ~<~ 0
    \right\}~,
\end{align}
 where
 \(d = \gcd(n,k)\), \(N = n/d\), and \(K=k/d\).
Finally, observe that the region~\eqref{eq:multi-set} is the set $g^{-1}(\mathbb{R}_{<0})$, where $g:\mathbb{R}_{>0}^{3} \rightarrow \mathbb{R} $
is given by $
g(\kappa_1, \kappa_2, \kappa_3):=
n^{N} (\kappa_1 {\ell_1})^{N-K}
        (\kappa_3 {\ell_3})^K 
        -
        (n-k)^{N-K} k^{K} (\kappa_2 {\ell_2})^{N}$,
which has exactly one negative term. 
Hence, Lemma~\ref{lem:signomial} implies that the multistionarity region~\eqref{eq:multi-set} is connected.
\end{proof}

\begin{example}[Example~\ref{ex:3-rxns}, continued] \label{ex:3-rxns-illustrate-theorem}
    For the network $\{0 \overset{\kappa_1}{\leftarrow} A,~ 2A \overset{\kappa_2}{\to} 3A \overset{\kappa_3}{\leftarrow} 4A \}$,
 the multistationarity region is as follows, which can be read from~\eqref{eq:multi-set} in the proof of Theorem~\ref{thm:1-species-up-to-3-rxns}:
 \begin{align*} 
    \left \{
    (\kappa_1, \kappa_2, \kappa_3) \in \mathbb{R}^3_{>0}
    \mid 
    4 \kappa_2^3 > 27 \kappa_1^2 \kappa_3
    \right\}~.
    \end{align*}
 \end{example}

\begin{example} \label{ex:joshi-again}
For a network of the form $\left\{ 0 {\leftrightarrows} A,~
    nA {\to} (n + \ell) A
    \right\}$, where $n\geq 2$ and $\ell \geq 1$,
the multistationarity region from~\eqref{eq:multi-set} 
exactly matches the one (due to Joshi) shown in Proposition~\ref{prop:joshi}.    
\end{example}

All multistationarity regions that we fully computed thus far are either empty, the full positive orthant, or defined by a single discriminantal inequality.  
In contrast, 
the multistationarity region in the next example is defined by two inequalities.

\begin{example} \label{ex:mss-region-2-inequalities}
Consider the following subnetwork of the network in Proposition~\ref{prop:disconnected}:
\begin{align*}
        \left\{ 0 \overset{\kappa_{1L}}{\longleftarrow} A 
        {\overset{\kappa_{1R}}{\longrightarrow }} 
        2A 
       \underset{\kappa_{3L}}{\overset{\kappa_{2R}}{\rightleftarrows}}
        3A  \right\}~.
\end{align*}
We claim that the multistationarity region 
is as follows:
 \begin{align} \label{eq:region-2-ineq} 
    \left\{
    (\kappa_{1L}, \kappa_{1R}, \kappa_{2R}, \kappa_{3L}) \in \mathbb{R}^4_{>0}
    \mid 
    \kappa_{1L}> \kappa_{1R},~
    \kappa_{2R}^2 > 4 (\kappa_{1L}- \kappa_{1R}) 
    \kappa_{3L}
        \right\}~.
    \end{align}
Indeed, this region is readily computed via Proposition~\ref{prop:num-roots-trinomial} and Descartes' rule of signs.  Next, we claim that the region~\eqref{eq:region-2-ineq} is connected.  An outline for the proof is as follows.  First, for ease of notation, we rewrite the region as $    \left\{
    (b, c, d, e) \in \mathbb{R}^4_{>0}
    \mid 0 < (b-c) <     \frac{d^2}{4e}
        \right\}$.  
Given a point $P=(b,c,d,e)$ in the region, the line segment from $P$ to $(\widetilde b, 1,d,e):=(b-c+1,1,d,e)$ remains in the region.  Next, the path from $(\widetilde b, 1,d,e)$ to  $(\widetilde b, 1,\widetilde d,1):= (\widetilde b, 1,d/\sqrt{e},1)$, given by the map $[0,1] \to \mathbb{R}^4$ defined by $t \mapsto \left( \widetilde b, 1,d\sqrt{\frac{e(1-t)+t}{e}}, e(1-t)+t \right)$,  also remains in the region.  We have reduced our problem to checking that the following set is connected: $
 \left\{
    (\widetilde b, \widetilde d) \in \mathbb{R}^2_{>0}
    \mid 0 < (\widetilde b-1) <     \frac{\widetilde{d}^2}{4}
        \right\}$.  This final set is readily seen to be connected, which completes the proof.
\end{example}

\begin{remark}
    In
    Example~\ref{ex:mss-region-2-inequalities}, the multistationarity region~\eqref{eq:region-2-ineq}
    is cut out by two inequalities (as a subset of the positive orthant), and the argument that we gave to show that the region is connected is somewhat ad-hoc.  In the future, we desire results, analogous to Lemma~\ref{lem:signomial}, that handle such situations (for instance, regions cut out by two polynomials of some special form).
\end{remark}

\subsection{Networks with two species} \label{sec:2-species}
The following is the main result of this subsection.
\begin{theorem}[Two species and up to two reactions] \label{thm:2-species}
If $G$ is a network with exactly two species and one or two reactions, then the 
multistationarity-allowing and 
multistationarity-enabling regions of $G$ are connected. 
\end{theorem}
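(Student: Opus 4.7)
The plan is to proceed by cases, using the Joshi--Shiu classifications. If $G$ has a single reaction $y \to y'$, then the mass-action ODE is $\kappa\, x^y (y' - y)$, which does not vanish on $\mathbb{R}^2_{>0}$ (as $\kappa\, x^y > 0$ and $y' - y \neq 0$), so $G$ admits no positive steady states and both multistationarity regions are empty (and vacuously connected). If $G$ has two reactions but is not multistationary, the regions are again empty. Assume from now on that $G$ has two reactions and is multistationary. By Lemmas~\ref{lem:2-species-2-rxn-mss} and~\ref{lem:2-species-2-rxn-mss-DEGENERATE}, the reaction vectors are negative scalar multiples: $y' - y = -\lambda(\tilde y' - \tilde y)$ for some $\lambda > 0$. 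The stoichiometric subspace is thus one-dimensional, a conservation-law matrix is $W = (w_1, w_2) := (a_2, -a_1)$ with $(a_1, a_2) = y' - y$, and positive steady states are precisely the positive solutions of $x_1^{\alpha} x_2^{\beta} = \lambda \kappa_2 / \kappa_1$, where $(\alpha, \beta) := y - \tilde y$.

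\emph{Nondegenerate case} (Lemma~\ref{lem:2-species-2-rxn-mss}): Here $\alpha, \beta, \alpha + \beta$ are all nonzero and the box diagram is one of four zigzag forms, each of which forces $w_1, w_2 \neq 0$. Substituting $x_2 = (c - w_1 x_1)/w_2$ into the steady-state equation and clearing denominators reduces the problem to counting positive roots of $x_1^{\alpha'}(c - w_1 x_1)^{\beta'} = C$ (or an analogous variant if $\alpha$ or $\beta$ is negative) on a subinterval of $\{x_1 > 0\}$, where $\alpha', \beta'$ are positive integers and $C$ is a monomial in $\kappa_1, \kappa_2$. A direct calculus analysis locates the unique extremum of the left-hand side, and there are two positive roots precisely when $C$ lies on one side of this extremal value. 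This yields a two-term polynomial inequality in $(\kappa_1, \kappa_2, c)$ with one positive and one negative monomial. In configurations with $w_1 w_2 > 0$ (zigzags 3 and 4), $c > 0$ holds automatically, so Lemma~\ref{lem:signomial} applies directly. In configurations with $w_1 w_2 < 0$ (zigzags 1 and 2), $c$ may have any sign a priori; but replacing $W$ by $-W$ if necessary (which produces a homeomorphic enabling region, by the proposition on choice of conservation-law matrix in Section~\ref{sec:basic-ppties}) reduces to the case $c > 0$, and Lemma~\ref{lem:signomial} again applies.

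\emph{Only-degenerate case} (Lemma~\ref{lem:2-species-2-rxn-mss-DEGENERATE}): We handle the four subcases in turn. In subcase~(i), $\alpha + \beta = 0$, so the steady-state locus is a line through the origin; multistationarity requires the conservation line to coincide with it, giving the codimension-two condition $\lambda \kappa_2 = \kappa_1 \cdot (-w_2/w_1)^{\alpha}$ together with $c = 0$, which is a connected ray in parameter space. In subcase~(ii), $y = \tilde y$ and the ODE factors as $x^y (y'-y)(\kappa_1 - \kappa_2/\lambda)$, so steady states exist iff $\kappa_2 = \lambda \kappa_1$, in which case every positive $x$ is a steady state; the enabling region is the Cartesian product of the hyperplane $\{\kappa_2 = \lambda \kappa_1\}$ and the connected set of $c$ for which the compatibility class meets $\mathbb{R}^2_{>0}$ in more than one point, hence connected. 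In subcases~(iii) and~(iv), one species (say $x_1$) is conserved, and the steady-state equation determines a unique positive value $x_1^* = x_1^*(\kappa_1, \kappa_2)$ depending continuously on the rate constants; multistationarity is equivalent to $c = x_1^*(\kappa)$, so the enabling region is the graph of a continuous positive function on $\mathbb{R}^2_{>0}$, which is connected. In every case, Corollary~\ref{cor:projection} then gives connectedness of the multistationarity-allowing region. The main technical obstacle is the bookkeeping in the nondegenerate case, where the four zigzag sign patterns must be handled uniformly so that the reduction produces a two-term polynomial inequality amenable to Lemma~\ref{lem:signomial} after an appropriate choice of conservation-law matrix.
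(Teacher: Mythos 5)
Your proposal is correct and follows the paper's skeleton: the same classification lemmas (Lemmas~\ref{lem:2-species-2-rxn-mss} and~\ref{lem:2-species-2-rxn-mss-DEGENERATE}), the same reduction of the steady-state system to one monomial equation plus one linear conservation equation, and the same four degenerate subcases, where your argument essentially coincides with Proposition~\ref{prop:2-species-2-rxns-DEGENERATE} (you are in fact a bit more careful than the paper in subcase~(ii), restricting to those $c$ whose compatibility class meets the open positive quadrant). The genuine difference is the connectedness step in the nondegenerate case. The paper (Proposition~\ref{prop:2-species-2-rxns}) keeps the fractional-power cutoff $c^*=h(\kappa,\widetilde\kappa)$ from~\eqref{eq:c-star} and argues geometrically via the three slope cases of Figure~\ref{fig:3-cases}: the enabling region~\eqref{eq:region-for-2-species} is the region above a graph, or between two graphs, hence connected; Lemma~\ref{lem:signomial} plays no role there. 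You instead clear the fractional exponent by raising the cutoff inequality to the integer power $\lvert (y_1-\widetilde y_1)+(y_2-\widetilde y_2)\rvert$ (nonzero precisely because the reactant-polytope slope is not $-1$), obtaining a binomial inequality with integer exponents and a single negative term, and then invoke Lemma~\ref{lem:signomial}, flipping $W$ to $-W$ when needed. This buys uniformity with the one-species arguments (which also run through Lemma~\ref{lem:signomial}), at the cost of the explicit cutoff description the paper obtains, and it leaves one detail you should spell out: when $W$ has entries of mixed signs, ``reducing to $c>0$'' is legitimate only because $c$ has a \emph{fixed} sign on the entire multistationarity region (this is exactly the content of Cases~2 and~3 in~\eqref{eq:region-for-2-species}); that fact does follow from your one-variable calculus analysis, but in the write-up it is asserted rather than proved. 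With that point made explicit, both routes are complete and yield the same regions.
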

Theorem~\ref{thm:2-species} follows directly from Propositions~\ref{prop:2-species-2-rxns} and~\ref{prop:2-species-2-rxns-DEGENERATE} (below) and the fact that networks with only one reaction are not multistationary (so their multistationarity regions are empty and hence connected).  
We begin with 
Proposition~\ref{prop:2-species-2-rxns}, which can be viewed as generalizing what we proved earlier about the running example, $\{ 2A + B \to 3A,~ A \to B\}$ (Proposition~\ref{prop:running-example-part-2}).  

\begin{proposition}[Two species and two reactions] \label{prop:2-species-2-rxns}
Let $G$ be a network with exactly two species and two reactions.  If $G$ is nondegenerately multistationary, then the following hold:
\begin{enumerate}
    \item the multistationarity-allowing region is 
    all of the positive orthant (namely, $\mathbb{R}^2_{>0}$), and 
    \item the multistationarity-enabling region is connected.
\end{enumerate}
\end{proposition}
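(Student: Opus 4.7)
The plan is to compute the multistationarity-enabling region explicitly as the negative-value locus of a polynomial function with exactly one negative coefficient, and then to invoke Lemma~\ref{lem:signomial}. Part~(1) follows because the projection of this region onto $(\kappa_1, \kappa_2)$-space is surjective onto $\mathbb{R}^2_{>0}$ (using Proposition~\ref{prop:projection}).

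By condition~(1) of Lemma~\ref{lem:2-species-2-rxn-mss}, the stoichiometric subspace is one-dimensional, so the mass-action ODE has the form $\frac{dx}{dt} = (y'-y)[\kappa_1 x^y - (\kappa_2/\lambda) x^{\tilde y}]$, and positive steady states coincide with the positive solutions of the single binomial equation $x_1^\alpha x_2^\beta = \kappa_1\lambda/\kappa_2$, where $\alpha := \tilde y_1 - y_1$ and $\beta := \tilde y_2 - y_2$ are nonzero integers (condition~(2)) satisfying $\alpha + \beta \neq 0$ (condition~(3)). The zigzag condition~(4) determines the sign pattern of $v := y'-y$, and hence of any conservation-law row $W = (w_1, w_2) \perp v$: in boxes~3 and~4 (positive-slope reactant polytope), $\alpha$ and $\beta$ share a sign and one may take $w_1, w_2 > 0$; in boxes~1 and~2 (negative-slope), $\alpha$ and $\beta$ have opposite signs and $W$ has mixed signs.

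In the first sub-case, assume WLOG $\alpha, \beta > 0$. Parametrize positive points of $P_c$ (with $c > 0$) by $w_1 x_1 = ct$, $w_2 x_2 = c(1-t)$ for $t \in (0, 1)$. The binomial becomes
\[
c^{\alpha+\beta}\, t^\alpha (1-t)^\beta \;=\; \kappa_1 \lambda w_1^\alpha w_2^\beta/\kappa_2,
\]
and since $t \mapsto t^\alpha(1-t)^\beta$ is strictly unimodal on $(0,1)$ with maximum $\alpha^\alpha\beta^\beta/(\alpha+\beta)^{\alpha+\beta}$ at $t^* = \alpha/(\alpha+\beta)$, two positive solutions exist iff
\[
(\alpha+\beta)^{\alpha+\beta}\lambda w_1^\alpha w_2^\beta\,\kappa_1 \;-\; \alpha^\alpha\beta^\beta\,\kappa_2\, c^{\alpha+\beta} \;<\; 0.
\]
This polynomial function on $\mathbb{R}^3_{>0}$ has exactly one negative coefficient, so its negative locus is connected by Lemma~\ref{lem:signomial}; taking $c$ large shows it is nonempty for every $(\kappa_1, \kappa_2)$. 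In the mixed-sign sub-case ($\alpha > 0 > \beta$), substituting the conservation law $x_1 = (c + w_2 x_2)/w_1$ into $x_1^\alpha = (\kappa_1\lambda/\kappa_2) x_2^{-\beta}$ produces a strictly unimodal function of $x_2$ whose unique critical value bounds the admissible $c$. Raising the resulting inequality to the integer power $\alpha$ (equivalently $\alpha+\beta$, nonzero by condition~(3)) clears fractional exponents and yields another one-negative-coefficient polynomial inequality in $(\kappa_1, \kappa_2, c)$; Lemma~\ref{lem:signomial} again gives connectedness, and the inequality is satisfiable for every $(\kappa_1, \kappa_2)$ by taking $|c|$ small with the appropriate sign.

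The hard part will be the bookkeeping in the second sub-case: one must choose the sign of $W$ so that the admissible $c$-values lie in $c > 0$ (applying the homeomorphism $c \mapsto -c$ if necessary, which is permissible since swapping $W$ for $-W$ does not alter the topology of the region), and then verify that after clearing fractional exponents, exactly one of the two monomials in the resulting polynomial inequality is negative. This requires tracking the signs of $\beta$ and $\alpha + \beta$ carefully through the algebra. The nondegeneracy condition $\alpha + \beta \neq 0$ is exactly what keeps the exponents integral after the clearing step and prevents the inequality from degenerating into a trivial one.
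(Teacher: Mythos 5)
Your proposal is correct, and it takes a genuinely different route from the paper's proof. The paper argues geometrically in phase space: using the slopes $\gamma$ and $\alpha$ of the reaction vector and of the reactant polytope, it writes the compatibility classes as lines and the steady-state set as the power curve $x_2=Kx_1^{-1/\alpha}$, identifies the unique tangent class via a cutoff value $c^*=h(\kappa,\widetilde\kappa)$ as in~\eqref{eq:c-star}, and concludes connectedness because the enabling region~\eqref{eq:region-for-2-species} is the region above, or sandwiched between, graphs of continuous functions -- no appeal to Lemma~\ref{lem:signomial} is needed there. You instead reduce to the binomial $x_1^{\alpha}x_2^{\beta}=\kappa_1\lambda/\kappa_2$ (with $\alpha=\widetilde y_1-y_1$, $\beta=\widetilde y_2-y_2$), restrict to a compatibility class, and convert the two-root condition into a single discriminant-type polynomial inequality with exactly one negative coefficient, so that Lemma~\ref{lem:signomial} applies; this is the same mechanism the paper uses for one-species networks, your case split (same-sign $\alpha,\beta$; mixed-sign with $\alpha+\beta>0$ or $\alpha+\beta<0$) matches the paper's Cases 1--3 in Figure~\ref{fig:3-cases}, and your same-sign computation recovers $c^2\kappa_1>4\kappa_2$ from Proposition~\ref{prop:running-example-part-2}. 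Two small corrections to the part you defer: in the mixed-sign case one raises the critical-value inequality to the power $\alpha$ (not ``equivalently $\alpha+\beta$'', which would not clear the $1/\alpha$ exponents); doing so and clearing positive rational constants yields an inequality of the form $D_1\,\kappa_2\,c^{\alpha+\beta}-D_2\,\kappa_1<0$ when $\alpha+\beta>0$, and $D_1\,\kappa_1\,c^{-(\alpha+\beta)}-D_2\,\kappa_2<0$ when $\alpha+\beta<0$, with $D_1,D_2>0$, so the bookkeeping does close as you claim, with multistationarity confined to $c>0$ once the sign of $W$ is normalized (legitimate, since replacing $W$ by $-W$ is a linear isomorphism of the region, as in the paper's result on the choice of conservation-law matrix). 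The trade-off between the two approaches: the paper's route gives the explicit cutoff function that is reused in Example~\ref{ex:running-example-end} and needs no coefficient-sign analysis, while yours produces explicit polynomial defining inequalities for the multistationarity-enabling region and a proof uniform in style with Theorem~\ref{thm:1-species-up-to-3-rxns}; part (1) is obtained by projection in both.
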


\begin{proof}
Assume that $G$ is nondegenerately multistationary and has exactly two species and two reactions, which we denote by $y \overset{\kappa}{\to} y'$ and $\widetilde y \overset{\widetilde \kappa}{\to} \widetilde y'$ (to avoid excessive indices).  
By Lemma~\ref{lem:2-species-2-rxn-mss}(1), the two reaction vectors are related by 
$y'-y =- \lambda (\widetilde y' - \widetilde y)$ for some $\lambda \in \mathbb{R}_{>0}$.

Next, we use the following notation from the proof of~\cite[Theorem 4.8]{joshi-shiu-small}.  Let $\gamma$ denote the slope of the reaction vector $y \to y'$, and let $\alpha$ be the slope of the reactant polytope of the network $G$:
\begin{align} \label{eq:gamma-and-alpha}
	\gamma~:=~\frac{y_2'-y_2}{y_1'-y_1}
	\quad \quad {\rm and}
    \quad \quad 
	\alpha~:=~ \frac{\widetilde y_2-y_2}{\widetilde y_1 - y_1}~.
	\end{align}
By Lemma~\ref{lem:2-species-2-rxn-mss}(2--3), 
the denominator of $\alpha$ does not vanish, and, additionally, $\alpha \notin \{0, -1\}$. 
Also, the ``zigzag'' pattern guaranteed by Lemma~\ref{lem:2-species-2-rxn-mss}(4) ensures that $\gamma$ and $\alpha$ have opposite (nonzero) signs.

Next, following the proof of~\cite[Theorem 4.8]{joshi-shiu-small} (or by straightforward algebraic manipulation), the equation defining each stoichiometric compatibility class is as follows:
	\begin{align} 
    \label{eq:stoic-rewritten}
	x_2 &~=~ g(x_1) ~:=~ \gamma x_1 + {c}/({y_1'-y_1})~ \quad \quad \mathrm{for~some~}c \in \mathbb{R}~,
 	\end{align}
and the steady-state equation is:
	\begin{align} 
    x_2 &~=~ h(x_1) ~:=~ K x_1^	 {-1/\alpha}   ~, \label{eq:steady-state-rewritten} %
	\end{align}
where $K:=\left( {\lambda \widetilde \kappa }/ {\kappa }\right)^{1/(y_2-\widetilde y_2)}$. 
In other words, the positive steady states of $G$ correspond to positive roots $x_1\in \mathbb{R}$ of the equation $h(x_1)=g(x_1)$.  

We consider three cases, based on the value of $\alpha$, which we depict qualitatively in Figure~\ref{fig:3-cases}.

\begin{center}
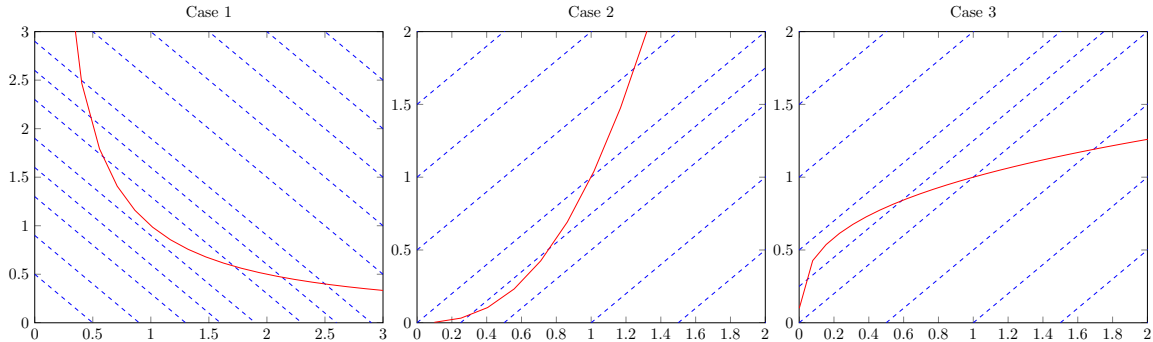
\begin{figure}[ht]
\begin{minipage}{.3\textwidth}
\begin{tikzpicture}[scale = 0.55]
\begin{axis}[xmin=0,xmax=3,ymin=0,ymax=3,
    title={Case 1}]
\addplot +[mark=none,color=blue,dashed] coordinates {(0,0.5) (0.5,0)};
\addplot +[mark=none,color=blue,dashed] coordinates {(0,0.9) (0.9,0)};
\addplot +[mark=none,color=blue,dashed] coordinates {(0,1.3) (1.3,0)};
\addplot +[mark=none,color=blue,dashed] coordinates {(0,1.6) (1.6,0)};
\addplot +[mark=none,color=blue,dashed] coordinates {(0,1.9) (1.9,0)};
\addplot +[mark=none,color=blue,dashed] coordinates {(0,2.3) (2.3,0)};
\addplot +[mark=none,color=blue,dashed] coordinates {(0,2.6) (2.6,0)};
\addplot +[mark=none,color=blue,dashed] coordinates {(0,2.9) (2.9,0)};
\addplot +[mark=none,color=blue,dashed] coordinates {(3,0.5) (0.5,3)};
\addplot +[mark=none,color=blue,dashed] coordinates {(3,1) (1,3)};
\addplot +[mark=none,color=blue,dashed] coordinates {(3,1.5) (1.5,3)};
\addplot +[mark=none,color=blue,dashed] coordinates {(3,2) (2,3)};
\addplot +[mark=none,color=blue,dashed] coordinates {(3,2.5) (2.5,3)};
\addplot[
    domain=0.1:3, 
    samples=20, 
    color=red,
    ]{x^(-1)};
\end{axis}
\end{tikzpicture}
\end{minipage}
\begin{minipage}{.3\textwidth}
\begin{tikzpicture}[scale = 0.55]
\begin{axis}[xmin=0,xmax=2,ymin=0,ymax=2,
    title={Case 2}]
\addplot +[mark=none,color=blue,dashed] coordinates {(0,0) (3,3)};
\addplot +[mark=none,color=blue,dashed] coordinates {(0,0.5) (2.5,3)};
\addplot +[mark=none,color=blue,dashed] coordinates {(0,1) (2,3)};
\addplot +[mark=none,color=blue,dashed] coordinates {(0.5,0) (3,2.5)};
\addplot +[mark=none,color=blue,dashed] coordinates {(1,0) (3,2)};
\addplot +[mark=none,color=blue,dashed] coordinates {(1.5,0) (3,1.5)};
\addplot +[mark=none,color=blue,dashed] coordinates {(2,0) (3,1)};
\addplot +[mark=none,color=blue,dashed] coordinates {(2.5,0) (3,0.5)};
\addplot +[mark=none,color=blue,dashed] coordinates {(0,1.5) (1.5,3)};
\addplot +[mark=none,color=blue,dashed] coordinates {(0,2) (1,3)};
\addplot +[mark=none,color=blue,dashed] coordinates {(0,2.5) (0.5,3)};
\addplot +[mark=none,color=blue,dashed] coordinates {(0.25,0) (3,2.75)};
\addplot[
    domain=0.1:3, 
    samples=20, 
    color=red,
    ]{x^(2.5)};
\end{axis}
\end{tikzpicture}
\end{minipage}
\begin{minipage}{.3\textwidth}
\begin{tikzpicture}[scale = 0.55]
\begin{axis}[xmin=0,xmax=2,ymin=0,ymax=2,
    title={Case 3}]
\addplot +[mark=none,color=blue,dashed] coordinates {(0,0) (3,3)};
\addplot +[mark=none,color=blue,dashed] coordinates {(0,0.25) (2.75,3)};
\addplot +[mark=none,color=blue,dashed] coordinates {(0,0.5) (2.5,3)};
\addplot +[mark=none,color=blue,dashed] coordinates {(0,1) (2,3)};
\addplot +[mark=none,color=blue,dashed] coordinates {(0.5,0) (3,2.5)};
\addplot +[mark=none,color=blue,dashed] coordinates {(1,0) (3,2)};
\addplot +[mark=none,color=blue,dashed] coordinates {(1.5,0) (3,1.5)};
\addplot +[mark=none,color=blue,dashed] coordinates {(2,0) (3,1)};
\addplot +[mark=none,color=blue,dashed] coordinates {(2.5,0) (3,0.5)};
\addplot +[mark=none,color=blue,dashed] coordinates {(0,1.5) (1.5,3)};
\addplot +[mark=none,color=blue,dashed] coordinates {(0,2) (1,3)};
\addplot +[mark=none,color=blue,dashed] coordinates {(0,2.5) (0.5,3)};
\addplot[
    domain=0.001:3, 
    samples=40, 
    color=red,
    ]{x^(1/3)};
\end{axis}
\end{tikzpicture}
\end{minipage}\caption{Three cases of compatibility classes~\eqref{eq:stoic-rewritten} depicted by (blue) dashed lines and the set of steady states~\eqref{eq:steady-state-rewritten} depicted by (red) solid curves.
The three cases correspond to 
$0<\alpha$ (Case 1),
$-1<\alpha<0$ (Case 2), and 
$\alpha<-1$ (Case 3).
In all three graphs, the x-axis and y-axis correspond to the first and second species, respectively.
\label{fig:3-cases}}
\end{figure}
\end{center}

We see in Figure~\ref{fig:3-cases} that, in all three cases, there is a unique compatibility class that is tangent to the steady-state curve. This compatibility class arises from a unique value of $c$ (when $\kappa$ and $\widetilde \kappa$ are fixed), which we call $c^*$, which is as follows (obtained from a straightforward calculus exercise):
    \begin{align} \label{eq:c-star}
    c^* ~&=~
    (y_1'-y_1)
    (- \gamma \alpha / K)^{\frac{1}{1+\alpha}} 
    \left(
    K - \gamma  (- \gamma \alpha / K)^{-\alpha}
    \right) \\
    ~&=~
    (y_1'-y_1)
    (- \gamma \alpha 
        (\kappa / \lambda \widetilde \kappa)^{1/(y_2-\widetilde{y}_2)} )^{\frac{1}{1+\alpha}} 
    \left(
    (\lambda \widetilde{\kappa} / \kappa )^{1/(y_2-\widetilde{y}_2)}
    - \gamma  
    (- \gamma \alpha (\kappa / \lambda \widetilde \kappa)^{1/(y_2-\widetilde{y}_2)} )^{-\alpha}
    \right) ~=:~ h(\kappa, \widetilde \kappa)~.
    \notag
    \end{align}

This $c^*$, which we view as a function $h(\kappa, \widetilde \kappa)$, serves as a ``cutoff'' for the range of compatibility classes (equivalently, values of $c$) for which there is more than one (in fact, two) positive steady states.  Indeed, by examining Figure~\ref{fig:3-cases}, we obtain the multistationarity-enabling region
 (with respect to the $(1\times 2)$ conservation-law matrix $W=[-(y_2'-y_2) \quad (y_1'-y_1)]$), which we denote by $\widetilde \Sigma$:
    \begin{align}\label{eq:region-for-2-species}       
    \widetilde \Sigma ~=~ 
    \begin{cases}
        \{(\kappa, \widetilde \kappa, c) \in 
       \mathbb{R}_{>0}^3 \mid 
        c > h(\kappa, \widetilde \kappa)
       \}
        & {\rm if}~ 0<\alpha \rm{~(Case~1)} \\      
        \{(\kappa, \widetilde \kappa, c) \in 
       \mathbb{R}_{>0}^2 \times \mathbb{R} 
        \mid 
        0 < c < h(\kappa, \widetilde \kappa)
       \}
       & {\rm if}~-1<\alpha<0 \rm{~(Case~2)} \\
        \{(\kappa, \widetilde \kappa, c) \in 
       \mathbb{R}_{>0}^2 \times \mathbb{R} 
        \mid 
         h(\kappa, \widetilde \kappa) < c <0
       \}
        & {\rm if}~\alpha<-1 \rm{~(Case~3)}
    \end{cases}
   \end{align}
In all three cases, it is straightforward to see that $\widetilde \Sigma$ is connected. 
Indeed, $\widetilde \Sigma$ is the region above the graph of the positive function $h(\kappa, \widetilde \kappa)$ (in Case~1) or the region between two graphs, one of which lies above the other (Cases~2 and~3).  Finally, the fact that the multistationarity-allowing region equals $\mathbb{R}_{>0}^2$ is verified easily using~\eqref{eq:region-for-2-species} and Proposition~\ref{prop:projection}.
\end{proof}

\begin{example}[Example~\ref{ex:box-diagram-illustrated}, continued]
\label{ex:running-example-end}
 The network
$G=\{ 2A + B \overset{\kappa_1}{\to} 3A,~ A \overset{\kappa_2}{\to} B\}$
falls into Case~1 in the proof of Proposition~\ref{prop:2-species-2-rxns}
($y=(2,1)$, $y'=(3,0)$, $\widetilde y =(1,0)$, $\widetilde{ y}' = (0,1)$, so $\alpha=1$, $\gamma=-1$, $\lambda=1$).   
It is straightforward to compute the ``cutoff function''~\eqref{eq:c-star}:
\[
h(\kappa_1, \kappa_2) ~=~ 2(\kappa_2/\kappa_1)^{1/2}~.
\]
 Hence, from~\eqref{eq:region-for-2-species}, the multistationarity region of $G$ is defined by $c>2(\kappa_2/\kappa_1)^{1/2}$, which is equivalent to the inequality~\eqref{eq:multi-region-for-running-ex} that was computed earlier in Proposition~\ref{prop:running-example-part-2}.
\end{example}

We end by considering the case of degenerate multistationarity.

\begin{proposition}[Two species and two reactions -- degenerate case] \label{prop:2-species-2-rxns-DEGENERATE}
Let $G$ be a network with exactly two species and two reactions.  If $G$ is multistationary but \uline{not} nondegenerately multistationary, then the following hold:
\begin{enumerate}
    \item the multistationarity-allowing region is connected, and 
    \item the multistationarity-enabling region is measure-zero and connected.
\end{enumerate}
\end{proposition}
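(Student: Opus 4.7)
The plan is to invoke Lemma~\ref{lem:2-species-2-rxn-mss-DEGENERATE} and handle its four sub-cases separately: in each, I will compute the multistationarity-enabling region explicitly inside the three-dimensional parameter-plus-constant space $\mathbb{R}^{2}_{>0}\times\mathbb{R}$ and verify directly that it is both measure-zero and connected. Part~(1) will then follow from Corollary~\ref{cor:projection}, since the continuous image of a connected set under the projection $(\kappa,\widetilde\kappa,c)\mapsto(\kappa,\widetilde\kappa)$ is connected. Throughout I will use that, in all four sub-cases of Lemma~\ref{lem:2-species-2-rxn-mss-DEGENERATE}, the reaction vectors satisfy $y'-y=-\lambda(\widetilde y'-\widetilde y)$ for some $\lambda>0$.

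In Case~(2), where $y=\widetilde y$, substituting $\widetilde y'-\widetilde y=-(y'-y)/\lambda$ collapses the mass-action ODE to $dx/dt=(\kappa-\widetilde\kappa/\lambda)\,x^{y}(y'-y)$, which admits a positive steady state if and only if $\widetilde\kappa=\lambda\kappa$; in that case \emph{every} positive point is a steady state, so the enabling region is the product of the ray $\{(\kappa,\lambda\kappa):\kappa>0\}$ with the open interval of $c$-values whose compatibility class meets $\mathbb{R}^{2}_{>0}$ in more than one point, a two-dimensional connected set. In Cases~(3) and~(4), which are symmetric under swapping species indices, combining the scalar-multiple condition with (say) $y_1'-y_1=0$ forces $\widetilde y_1'-\widetilde y_1=0$ as well, so the first species is conserved in both reactions; the $x_2$-equation then factors as $x_2^{y_2}(y_2'-y_2)\bigl[\kappa\,x_1^{y_1}-(\widetilde\kappa/\lambda)\,x_1^{\widetilde y_1}\bigr]$, and since $y_1=\widetilde y_1$ would reduce to Case~(2) we may assume $y_1\neq\widetilde y_1$. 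The bracket then vanishes at a unique positive $x_1=(\widetilde\kappa/(\lambda\kappa))^{1/(y_1-\widetilde y_1)}$, at which every positive $x_2$ is a steady state; thus the enabling region is the graph of a positive continuous function of $(\kappa,\widetilde\kappa)$, again two-dimensional, connected, and of measure zero.

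For Case~(1), where the reactant-polytope slope $\alpha$ equals $-1$, both of the stoichiometric-class and steady-state curves~\eqref{eq:stoic-rewritten} and~\eqref{eq:steady-state-rewritten} from the proof of Proposition~\ref{prop:2-species-2-rxns} become lines, $x_2=Kx_1$ and $x_2=\gamma x_1+c/(y_1'-y_1)$. Two or more positive intersections exist iff these lines coincide, i.e., $K=\gamma$ and $c=0$; the first condition cuts out a smooth positive curve in the $(\kappa,\widetilde\kappa)$-plane, so the enabling region is one-dimensional inside $\mathbb{R}^{3}_{>0}$, hence measure-zero and connected. The main obstacle will be Case~(1), where one must verify that the degenerate compatibility class $\{c=0\}$ actually meets the open positive orthant: this is guaranteed because the zigzag pattern from Lemma~\ref{lem:2-species-2-rxn-mss}(4) forces $\gamma$ and $\alpha$ to have opposite signs, so $\gamma>0$ and the line $x_2=\gamma x_1$ contains an entire open ray in $\mathbb{R}^{2}_{>0}$.
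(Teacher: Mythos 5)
Your proposal is correct and follows essentially the same route as the paper: the same four-case decomposition via Lemma~\ref{lem:2-species-2-rxn-mss-DEGENERATE}, with the enabling region computed explicitly in each case as a ray-times-interval, a graph of a positive function, or the curve $\{K=\gamma,\ c=0\}$, all measure-zero and connected. The only (harmless) differences are that you obtain part~(1) by projecting via Corollary~\ref{cor:projection} rather than writing out $\Sigma$ explicitly, and your constant $(\widetilde\kappa/(\lambda\kappa))^{1/(y_1-\widetilde y_1)}$ in Cases~3--4 is the directly computed analogue of the paper's expression, which does not affect the topological conclusions.
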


\begin{proof}
Assume that $G$ is as in the statement of the proposition, and let 
$\Sigma$ 
and 
$\widetilde \Sigma$ 
denote the 
multistationary-allowing 
and 
multistationarity-enabling 
regions, respectively.  
Denote the two reactions of $G$ by $y \overset{\kappa}{\to} y'$ and $\widetilde y \overset{\widetilde \kappa}{\to} \widetilde y'$. By Lemma~\ref{lem:2-species-2-rxn-mss-DEGENERATE}, we know that $y'-y =- \lambda (\widetilde y' - \widetilde y)$ for some $\lambda \in \mathbb{R}_{>0}$, and 
there are four cases to consider:
(Case~1) $G$ satisfies the four conditions of Lemma~\ref{lem:2-species-2-rxn-mss}, except the slope of the reactant polytope equals $-1$; 
(Case~2) 
$y= \widetilde y$; 
 (Case~3) $y'_1 - y_1=\widetilde y_2 -  y_2=0$; 
  and (Case~4) $\widetilde y_1 -  y_1=y'_2 - y_2=0$.

We begin with Case~1. We follow the notation~\eqref{eq:gamma-and-alpha} from a prior proof, where in our case $\alpha=-1$.  
Next, the equations~\eqref{eq:stoic-rewritten}--\eqref{eq:steady-state-rewritten} yield the following: 
\begin{itemize}
    \item the stoichiometric compatibility classes are defined by lines with positive slope, $g(x_1) = \gamma x_1 + {c}/({y_1'-y_1})$, for $c \in \mathbb R$,  and
    \item the steady-state equation is a line through the origin with slope $K$, that is, $h(x_1)=~ K x_1$. 
\end{itemize}
It follows that multistationarity occurs precisely when $c=0$ and the slopes coincide: $\gamma=K$ (equivalently, 
$\lambda \kappa_2 = \kappa_1 \left( \frac{y_2'-y_2}{y_1'-y_1}\right)^{y_2 - \widetilde y_2} $).  
This yields multistationarity regions that are measure-zero and connected  (where $\widetilde \Sigma$ is with respect to the $(1\times 2)$ conservation-law matrix $W=[-(y_2'-y_2) \quad (y_1'-y_1)]$):
\begin{align*}    
    \widetilde \Sigma ~&=~ 
        \left\{ (\kappa, \widetilde \kappa, c) \in 
       \mathbb{R}_{>0}^2 \times \mathbb{R} \mid 
        c=0,~ \lambda \kappa_2 = \kappa_1 \left( \frac{y_2'-y_2}{y_1'-y_1}\right)^{y_2 - \widetilde y_2}
       \right\}
       \quad {\rm and} \\
        \Sigma ~&=~ 
        \left\{ (\kappa, \widetilde \kappa) \in 
       \mathbb{R}_{>0}^2  \mid 
        \lambda \kappa_2 = \kappa_1 \left( \frac{y_2'-y_2}{y_1'-y_1}\right)^{y_2 - \widetilde y_2}
       \right\}~.
\end{align*}

Next, we consider Cases~2--4 (which are, respectively, subcases (i)--(iii) in the proof of~\cite[Theorem~4.8]{joshi-shiu-small}). We analyze Case~2.  From the proof of~\cite[Theorem~4.8]{joshi-shiu-small} (or direct computation), multistationarity occurs in every compatibility class exactly when 
$\kappa_1=\lambda \widetilde \kappa_2$, 
and there are no positive steady states (in any compatibility class) when $\kappa_1 \neq \lambda \widetilde \kappa_2$.  So, both $\Sigma$ and $\widetilde \Sigma$ are defined by the single equation $\kappa_1=\lambda \widetilde \kappa_2$, and hence are measure-zero and connected.

We turn to Case~3.  We may assume that $\widetilde y_1 - y_1 \neq 0$ (otherwise, we return to Case~2).  
Following the proof of~\cite[Theorem~4.8]{joshi-shiu-small} (or direct computation), the steady-state equation is a vertical line: 
$x_1 = 
\left(\frac{ \kappa }{\lambda \widetilde \kappa}\right)^{1/(y_1 - \widetilde y_1)}$.  
The stoichiometric compatibility classes also are defined by vertical lines $x_1=c$.  Multistationarity occurs exactly when these two lines coincide, which yields the following multistationarity regions (where $\widetilde \Sigma$ is with respect to the conservation-law matrix $W=[1 ~ 0]$):
\begin{align*}    
    \widetilde \Sigma ~&=~ 
        \left\{ (\kappa, \widetilde \kappa, c) \in 
       \mathbb{R}_{>0}^3 \mid 
        c=\left(\frac{ \kappa }{\lambda \widetilde \kappa}\right)^{1/(y_1 - \widetilde y_1)}
       \right\}
       \quad {\rm and} \quad 
        \Sigma ~=~ 
       \mathbb{R}_{>0}^2~.  
       \end{align*}
Notice that $\Sigma$ is connected, and $\widetilde \Sigma$ is measure-zero and connected. 

Finally, Case~4 is symmetric to Case~3.
\end{proof}

\section{Discussion} \label{sec:discussion}
In this work, we proved that one-species networks with six reactions can have multistationarity regions that are disconnected (Proposition~\ref{prop:disconnected}), but this is not the case for one-species networks with up to three reactions (Theorem~\ref{thm:1-species-up-to-3-rxns}).  
The remaining in-between cases are not well understood.  Indeed, it is an open question whether, for one-species networks with four or five reactions (e.g., $\{0 \leftrightarrows A,~ 2A \leftrightarrows 3A\}$), the multistationarity regions
are always connected.  (An answer to this question may involve discriminants of quadrinomials~\cite{otake}.)

We also showed that all two-species networks with up to two reactions have connected multistationarity regions (Theorem~\ref{thm:2-species}).  A future direction is to allow for networks with two reactions, but any number of species (such networks that are multistationary have been classified~\cite{joshi-shiu-small}).  
Other networks for future consideration are 
the eleven ``continuous-flow stirred-tank reactor (CFSTR) atoms of multistationarity'' listed in~\cite[Theorem~5.3]{mss-review}.  

Next, one of our contributions was simply to give names to two types of multistationarity regions appearing in the literature: the ``multistationarity-allowing'' and ``multistationarity-enabling'' regions, denoted by $\Sigma$ and $\widetilde \Sigma$.  There is a projection $\widetilde \Sigma \to \Sigma$ that preserves connectivity; 
and Telek and Feliu conjectured the converse, namely, if  $ \Sigma$ is connected, then so is $\widetilde \Sigma$ (Remark~\ref{rem:conjecture}).  

Proving this conjecture would make it easier to check whether $\widetilde \Sigma$ is connected. Indeed, there is a family of biochemical networks (namely, the $m$-site phosphorylation cycles with sequential and distributive mechanisms) for which $\Sigma$ is known to be connected~\cite{FKdY2}, but it is unknown whether $\widetilde \Sigma$ is connected~\cite[\S3]{telek-feliu-topological}.  Our results verify that the conjecture of Telek and Feliu holds for small networks, and future research in this direction 
may provide more evidence for -- and possibly ideas toward proving -- their conjecture.

Finally, our work motivates the question of whether (or when) connectivity of multistationarity regions can be ``inherited'' from small reaction networks to larger ones.  Indeed, such inheritance is known to be possible for certain dynamical properties, including the capacity for nondegenerate multistationarity and/or periodic orbits (see, for instance,~\cite{banaji-inheritance}).  Proving analogous results for the connectivity of multistationarity regions would add significance to our results on small networks.

\subsection*{Acknowledgements} {\small
AS was supported by the NSF (DMS-1752672).
AS thanks Andrea Barton, Elisenda Feliu, Nidhi Kaihnsa, Xiaoxian Tang, and M\'at\'e Telek for helpful discussions.
We are grateful to several reviewers whose detailed suggestions helped improve our work.
}


\end{document}